\DeclareMathOperator{\Id}{Id}
\DeclareMathOperator{\Prb}{\mathbf{P}}
\DeclareMathOperator{\Mean}{\mathbf{E}}
\DeclareMathOperator{\argmin}{argmin}
\numberwithin{equation}{section}
\begin{document}
% Befehls- und Umgebungsdefinitionen

% T fuer Matrizentransposition
\newcommand{\trans}[1]{{#1}^\mathsf{T}}

\newcommand{\BB}[1]{\boldsymbol{#1}}
\newcommand{\prbms}[2][]{\mathcal{P}_{#1}(#2)}
\newcommand{\Borel}[1]{\mathcal{B}(#1)}

% Neue Umgebungen (mit zugehoerigen Zaehlern)

\newcounter{hypcount}
\newenvironment{hypenv}{\renewcommand{\labelenumi}{(A\arabic{enumi})}\begin{enumerate}\setcounter{enumi}{\value{hypcount}}}{\setcounter{hypcount}{\value{enumi}}\end{enumerate}}

\newcounter{hypcount2}
\newenvironment{Hypenv}{\renewcommand{\labelenumi}{\textsc{(H\arabic{enumi})}}\begin{enumerate}\setcounter{enumi}{\value{hypcount2}}}{\setcounter{hypcount2}{\value{enumi}}\end{enumerate}}

\newenvironment{enumrm}{\begin{enumerate}\renewcommand{\labelenumi}{\textup{(\roman{enumi})}}}{\end{enumerate}}

% Neue Verweisbefehle (vgl. neue Umgebungen)

\newcommand{\hypref}[1]{(A\ref{#1})}
\newcommand{\hyprefall}{(A1)\,--\,(A\arabic{hypcount})}
\newcommand{\Hypref}[1]{(H\ref{#1})}
\newcommand{\Hyprefall}{(H1)\,--\,(H\arabic{hypcount2})}

\newcounter{dummy}
\newcommand{\hyprefallbutlast}{\setcounter{dummy}{\value{hypcount}}\addtocounter{dummy}{-1}(A1)\,--\,(A\arabic{dummy})}

\newcommand{\rmref}[1]{\setcounter{dummy}{\ref{#1}}(\roman{dummy})}

% AMS-Umgebungen

\theoremstyle{plain}
\newtheorem{thrm}{Theorem}[section]
\newtheorem{prop}{Proposition}[section]
\newtheorem{lemma}{Lemma}[section]
\newtheorem{crll}[thrm]{Corollary}

\theoremstyle{definition}
\newtheorem{defn}{Definition}[section]

\theoremstyle{remark}
\newtheorem{case}{Case}[section]
\newtheorem{rem}{Remark}[section]
\newtheorem{exmpl}{Example}[section]

% Skript phi / epsilon
\renewcommand{\phi}{\varphi}
\renewcommand{\epsilon}{\varepsilon}

% Ende der Befehls- und Umgebungsdefinitionen

\selectlanguage{english}

% Titel, Danksagung
\title{Correlated equilibria and mean field games: \\ a simple model}

\author{Luciano Campi\footnote{Department of Mathematics ``Federigo Enriques'', University of Milan, Via Saldini 50, 20133 Milan, Italy.} \and Markus Fischer\footnote{Department of Mathematics ``Tullio Levi-Civita'', University of Padua, via Trieste 63, 35121 Padova, Italy.}}

\date{April 13, 2020; last revision June 3, 2021}

\maketitle

\begin{abstract}
In the context of simple finite-state discrete time systems, we introduce a generalization of mean field game solution, called correlated solution, which can be seen as the mean field game analogue of a correlated equilibrium. Our notion of solution is justified in two ways: We prove that correlated solutions arise as limits of exchangeable correlated equilibria in restricted (Markov open-loop) strategies for the underlying $N$-player games, and we show how to construct approximate $N$-player correlated equilibria starting from a correlated solution to the mean field game.
\end{abstract}

\textbf{Keywords and phrases:} Nash equilibrium, correlated equilibrium, mean field game, weak convergence, restricted strategy, exchangeability.\medskip
 
{\small \textbf{2020 AMS subject classifications:} 60B10, 91A06, 91A16, 93E20}

\section{Introduction}\label{SectIntro}

Correlated equilibria are generalizations of Nash equilibria that allow for correlation between players' strategies. In this paper, we consider correlated equilibria for a simple class of symmetric finite horizon $N$-player games and their natural mean field game counterpart as the number of players $N$ goes to infinity.

\emph{Mean field games} (\mbox{MFGs}, for short), independently introduced by \citet{huangetal06} and \citet{lasrylions07}, arise as limit systems for certain symmetric stochastic $N$-player games with mean field interaction as the number of players $N$ tends to infinity. Each player interacts with her competitors only via the empirical distribution of their positions so that, when $N \to \infty$, one expects the empirical distribution to converge to the law of the ``representative player'' (Law of Large Numbers or Propagation of Chaos). In the limiting \mbox{MFG}, the ``representative player'' reacts optimally to the behavior of the population, which in turn should arise at equilibrium by aggregation of all identical players' best responses. For a thorough treatment of \mbox{MFG} theory from a probabilistic perspective we refer to the two-volume book by \citet{carmonadelarue18}. 

A rigorous connection between \mbox{MFGs} and the underlying $N$-player games can be established in two directions: constructing approximate Nash equilibria for $N$-player games starting from a solution to the \mbox{MFG} (for instance, \citet{huangetal06}, \citet{carmonadelarue13}, \citet{gomesetal13}, to name a few), or by showing convergence of approximate $N$-player Nash equilibria to solutions of the \mbox{MFG}, as $N\to \infty$.
Crucial, especially in the second direction, is the choice of admissible strategies in the definition of $N$-player Nash equilibria. Particularly difficult is the question of convergence in the non-stationary case when Nash equilibria are considered in closed-loop strategies (Markov feedback strategies with full state information). A breakthrough in this direction was made in \citet{CDLL15}, where convergence of Nash equilibria is established through the so-called master equation provided the latter is well-posed, an assumption that  implies uniqueness of \mbox{MFG} solutions. 
%In this situation, also CLT and LDP from the \mbox{MFG} limit:
%\cite{cp} and \cite{bayraktarcohen} for finite state games, \cite{delarueetal} in the diffusion setting.
%Recent case studies under non-uniqueness: \cite{nutzetalii} for optimal stopping problems, \cite{delaruefoguen} example of restoration of uniqueness through common noise, \cite{CDFP}, \cite{bayraktarzhang} for two-state models.
More recently, in \cite{lacker18}, a general convergence result was proved in the non-degenerate diffusion setting, but to weak solutions of the \mbox{MFG}. For weak \mbox{MFG} solutions, the limiting flow of measures can be stochastic even without common noise. An important question, converse to the convergence result, is whether all weak \mbox{MFG} solutions can be obtained as limits of convergent closed-loop $N$-player approximate Nash equilibria. %In other terms, starting from a weak MFG solutions, is there a way to construct an approximate Nash equilibrium in the $N$-player game for $N$ large enough? 
The analysis performed in \citet[Sect.~7]{lacker18} seems to suggest that this is not always possible. We believe that a way to have a characterization of all MFG solutions as limits of approximate Nash equilibria in $N$-player games is to consider a more general concept of solution, such as correlated equilibria.

\emph{Correlated equilibria} were introduced in two seminal papers by Robert Aumann \citep{aumann74,aumann87} for many-player games. Aumann's main idea can be explained as follows: a \emph{mediator} or \emph{correlation device} randomly selects a strategy profile according to some publicly known distribution, then recommends to each player \emph{in private} a strategy according to the profile. A probability distribution on the space of strategy profiles is a correlated equilibrium (\mbox{CE}, for short) if no player has an incentive to unilaterally deviate from the mediator's recommendation. In case the mediator uses a product probability distribution, we are back to a Nash equilibrium in mixed strategies. A classical concrete example of a mediator is that of a traffic light; see, for instance, Section 13.1.4 in \citep{roughgarden16} for more details. Moreover, we notice that the notion of \mbox{CE} admits other equivalent interpretations than that of a mediator. For instance, in \citep{barany92} it is shown that a \mbox{CE} of a non-cooperative $N$-person game ($N \ge 4$) coincides with a Nash equilibrium of an extended game where the players are allowed to communicate before the original game starts.

Originally introduced in the context of static games with complete information by Aumann, the new notion of \mbox{CE} gave rise to a huge literature in game theory as well as economics along many directions. We refer to the survey by \citet{forges12} on several aspects of the more general notion of communication equilibrium and extensions of \mbox{CE} to dynamic games, possibly stochastic and with incomplete information. More specifically on \mbox{CE} in stochastic games whose framework is close to ours, an extensive study has been performed in \citep{solan00,solan01,solanvieille02}.

The important role the concept of \mbox{CE} plays in game theory and economics can be explained by its many appealing properties, as compared to Nash equilibria. For instance, higher equilibrium payoffs can be reached, possibly outside the convex hull of Nash equilibrium payoffs. The computational complexity of \mbox{CE} is generally lower than for Nash equilibria \citep[see][]{gilboazemel89}. In the evolutionary game theory literature, it has been proved that if all players follow natural learning procedures then the empirical distribution of their actions converges to \mbox{CE} distributions \citep[for instance,][]{hart05}. Moreover, given their interpretation in terms of mediator's recommendations, correlated equilibria can be seen as intermediate configurations between the two extreme cases of decentralized solutions such as Nash equilibria on the one hand, and centrally planned optimal solutions that are forced on the players on the other hand.

Here, we consider correlated equilibria for a simple class of symmetric finite horizon $N$-player games and their natural \mbox{MFG} counterpart as $N \to \infty$. In the $N$-player setting, the state variables evolve in discrete time, both state space and the set of control actions are finite and, more importantly, the players are allowed to use only restricted strategies, that is, feedback strategies that depend only on time and the corresponding individual state variable. We believe that further extensions of our results to games in continuous time and with common noise are possible. However, they are postponed to future research as they will most probably require different techniques.

Within this framework, we propose a notion of \emph{correlated \mbox{MFG} solution, 
%which we justify in two ways: by showing the convergence of $N$-player symmetric \mbox{CE} to a correlated \mbox{MFG} solution (convergence or forward approximation), and by constructing approximate \mbox{CE} starting from a correlated solution of the \mbox{MFG} (backward approximation). A correlated \mbox{MFG} solution is 
defined as a probability distribution over the space of all pairs of strategies and flows of measures such that (i) the ``representative player'' has no incentive to deviate from the mediator's recommendation; (ii) the flow of measures at any time $t$ equals the marginal law of the state variable at time $t$ conditioned on the $\sigma$-algebra generated by the whole flow of measures up to the terminal time. The main contribution of the paper consists in justifying the definition of correlated MFG solutions in the following two ways:} 
\begin{itemize}
	%\item Inspired by the notion of correlated equilibrium in game theory, we propose a new concept of solution for MFGs, which is justified by the following two approximation results.
	
	\item We prove that any sequence of symmetric approximate correlated equilibria in restricted strategies for the $N$-player games subsequentially converges towards some correlated \mbox{MFG} solution according to the definition above (forward approximation).
	
	\item We also prove the converse backward approximation, that is, any correlated \mbox{MFG} solution arises as limit of symmetric approximate \mbox{CE} in the $N$-player games as $N \to \infty$ or, in other terms, any correlated \mbox{MFG} solution can be implemented in a natural way by some mediator willing to recommend strategies to the players.
\end{itemize}
Both approximation results will be proved using a purely probabilistic approach heavily relying on the theory of weak convergence of probability measures as well as coupling arguments.
  
The rest of the paper is structured as follows. In Section~\ref{SectNotation}, we introduce the notation and basic elements for the objects of our study. In Section~\ref{SectPrelimit}, we describe the underlying $N$-player games and give the definition of (approximate) correlated equilibrium in restricted strategies. Moreover, we also prove that $N$-player correlated equilibria exist in the class of symmetric profile distributions. Section~\ref{SectMFG} is dedicated to the mean field limit model. There, we give our definition of correlated \mbox{MFG} solution. An example of a correlated \mbox{MFG} with explicit solutions is provided in Section~\ref{SectMFGExample}. In Section~\ref{SectConv}, we show that symmetric $N$-player correlated equilibria concentrate, in the limit as $N\to \infty$, on correlated \mbox{MFG} solutions, while Section~\ref{SectApprox} contains the converse result, that is, any correlated \mbox{MFG} solution arises as a limit of symmetric approximate $N$-player correlated equilibria as $N \to \infty$. In Appendix~\ref{AppAux}, we collect some auxiliary results.

%-----

\section{Preliminaries} \label{SectNotation}

For a Polish space $\mathcal{S}$, we denote by $\prbms{\mathcal{S}}$ the space of probability measures on $\Borel{\mathcal{S}}$, the Borel sets of $\mathcal{S}$, and endow $\prbms{\mathcal{S}}$ with the topology of weak convergence of measures. Many of the spaces of interest here are simply finite sets. We endow a finite set with the discrete topology, which makes it a Polish space (a compatible metric being the discrete metric).

If $\mathcal{S}$ is finite, then a metric on $\prbms{\mathcal{S}}$ compatible with the weak convergence topology is given by the following $L^{1}$-distance, which we indicate by $\mathsf{dist}$ when the underlying space is clear from the context:
\[
	\mathsf{dist}(m,\tilde{m})\doteq \frac{1}{2} \sum_{x\in \mathcal{S}} | m(x) - \tilde{m}(x)|,\quad m,\tilde{m}\in \prbms{\mathcal{S}}. 
\]
Notice that weak convergence and convergence in total variation coincide for probability measures over a finite set. If $m$, $\tilde{m}$ are empirical measures of the same size, that is, if $m = \frac{1}{N} \sum_{i=1}^{N} \delta_{s_{i}}$, $\tilde{m} = \frac{1}{N} \sum_{i=1}^{N} \delta_{\tilde{s}_{i}}$ for some $s_{i}, \tilde{s}_{i} \in \mathcal{S}$, $i\in \{1,\ldots,N\}$, then
\begin{equation} \label{EqEMdist}
	\mathsf{dist}(m,\tilde{m}) \leq \min_{\sigma \text{ permutation of } \{1,\ldots,N\}} \frac{1}{N} \sum_{i=1}^{N} \mathbf{1}_{s_{i} \neq \tilde{s}_{\sigma(i)}} \\
	\leq \frac{1}{N} \sum_{i=1}^{N} \mathbf{1}_{s_{i} \neq \tilde{s}_{i}}.
\end{equation}

We consider symmetric dynamic games in discrete time over a finite time horizon with individual state and action spaces given by finite sets. Admissible strategies will have Markov feedback form but with information restricted to player's individual states (sometimes called ``Markov open-loop''). To fix the notation, we choose
\begin{itemize}
	\item $T\in \mathbb{N}$, representing the finite time horizon (with initial time zero);
	
	\item non-empty finite sets $\mathcal{X}$ and $\Gamma$, the set of individual states and control actions, respectively;
	
	\item a measurable function $\Psi\!: \{0,\ldots,T\!-\!1\}\times \mathcal{X}\times \prbms{\mathcal{X}}\times \Gamma\times \mathcal{Z}  \rightarrow \mathcal{X}$, the system function, determining the one-step individual state dynamics, where $\mathcal{Z}\doteq [0,1]$ is the space of noise states;
	
	\item a bounded measurable function $f\!: \{0,\ldots,T\!-\!1\}\times \mathcal{X}\times \prbms{\mathcal{X}}\times \Gamma \rightarrow \mathbb{R}$, representing the running costs;
	
	\item a bounded measurable function $F\!: \mathcal{X}\times \prbms{\mathcal{X}} \rightarrow \mathbb{R}$, representing the terminal costs.
	
\end{itemize}

Denote by $\nu$ the uniform distribution on the Borel sets of $\mathcal{Z} = [0,1]$; $\nu$ will be the common distribution of the random variables representing the idio\-syncratic noise.

Let $\mathcal{R}$ denote the set of Markov feedback strategies over players' own states (\emph{restricted strategies}):
\[
	\mathcal{R} \doteq \left\{ \phi\!: \{0,\ldots,T\!-\!1\}\times \mathcal{X}\rightarrow \Gamma \right\}.
\]
Notice that $\mathcal{R}$ is a finite set; it will hence be endowed with the discrete topology. Let $\mathcal{U}$ denote the set of mappings from $\mathcal{R}$ to $\mathcal{R}$:
\[
	\mathcal{U}\doteq \left\{ u\!: \mathcal{R} \rightarrow \mathcal{R} \right\}.
\]
Since $\mathcal{R}$ is a finite set, $\mathcal{U}$ is finite, too; it will therefore be endowed with the discrete topology. Any element of $\mathcal{U}$, that is, any function $u\!: \mathcal{R}\rightarrow \mathcal{R}$ (which is automatically measurable) will be referred to as a \emph{strategy modification}.

For the $N$-player game, we have to consider probability measures on strategy vectors (or strategy profiles). Any such probability measure, that is, any element of $\prbms{\mathcal{R}^{N}}$, will be called a \emph{correlated profile}. For the mean field game, we will consider probability measures on individual strategies times flows of state distributions. Any such probability measure, that is, any element of $\prbms{\mathcal{R} \times \prbms{\mathcal{X}}^{T+1}}$, will be called a \emph{correlated flow}.

%-------

\section{The $N$-player games} \label{SectPrelimit}

Fix $N\in \mathbb{N}$. Choose $\mathfrak{m}^{N} \in \prbms{\mathcal{X}^{N}}$, the joint distribution of the players' states at time zero; for instance, $\mathfrak{m}^{N} = \otimes^N \mathfrak{m}_{0}$ for some $\mathfrak m_0 \in \mathcal P(\mathcal X)$. Let $\gamma^{N} \in \prbms{\mathcal{R}^{N}}$ be an $N$-player correlated profile, and let $u\in \mathcal{U}$ be a strategy modification.

A tuple $((\Omega,\mathcal{F},\Prb),\Phi^{N}_1,\ldots,\Phi^{N}_N,X^{N}_{1}(.),\ldots,X^{N}_{N}(.),\xi^{N}_{1}(.),\ldots,\xi^{N}_{N}(.))$ is called a \emph{realization} of the triple $(\mathfrak{m}^{N},\gamma^{N},u)$ \emph{for player $i$} if $\Phi^{N}_1,\ldots,\Phi^{N}_N$ are $\mathcal{R}$-valued random variables, $X^{N}_{j}(t)$, $j\in \{1,\ldots,N\}$, $t\in \{0,\ldots,T\}$, $\mathcal{X}$-valued random variables, and $\xi^{N}_{j}(t)$, $j\in \{1,\ldots,N\}$, $t\in \{1,\ldots,T\}$, are $\mathcal{Z}$-valued random variables all defined on the probability space $(\Omega,\mathcal{F},\Prb)$ such that 
\begin{enumerate}[(i)]

	\item $\Prb\circ (X^{N}_{1}(0),\ldots,X^{N}_{N}(0))^{-1} = \mathfrak{m}^{N}$;
	
	\item $\Prb\circ (\Phi^{N}_1,\ldots,\Phi^{N}_N)^{-1} = \gamma^{N}$;

	\item $\xi^{N}_{j}(t)$, $j\in \{1,\ldots,N\}$, $t\in \{1,\ldots,T\}$, are independent and identically distributed (i.i.d.) with common distribution $\Prb\circ (\xi^{N}_{j}(t))^{-1} = \nu$;

	\item\label{PrelimitRealizationIndependence} $(\xi^{N}_{j}(t))_{j\in \{1,\ldots,N\}, t\in \{1,\ldots,T\} }$, $(X^{N}_{j}(0))_{j\in \{1,\ldots,N\}}$, and $(\Phi^{N}_{j})_{j\in \{1,\ldots,N\}}$ are independent as random variables with values in $\mathcal{Z}^{N\cdot T}$, $\mathcal{X}^{N}$, and $\mathcal{R}^{N}$, respectively;

	\item\label{PrelimitDynamics} $\Prb$-almost surely, for every $t\in \{0,\ldots,T-1\}$,
	\begin{align*}
	X^{N}_{i}(t+1) &= \Psi\left(t,X^{N}_{i}(t),\mu^{N}_{i}(t),u\circ\Phi^{N}_{i}\left(t,X^{N}_{i}(t)\right), \xi^{N}_{i}(t+1) \right),\\
	X^{N}_{j}(t+1) &= \Psi\left(t,X^{N}_{j}(t),\mu^{N}_{j}(t),\Phi^{N}_{j}\left(t,X^{N}_{j}(t)\right), \xi^{N}_{j}(t+1) \right),\quad j\neq i,
	\end{align*}
	where $\mu^{N}_{l}(t)$ is the empirical measure of the states at time $t$ of all players except player $l$:
	\[
		\mu^{N}_{l}(t) \doteq \frac{1}{N-1} \sum_{j\neq l} \delta_{X^{N}_{j}(t)},\quad l\in \{1,\ldots,N\}.
	\]

\end{enumerate}
The difference in the dynamics of $X^N_i$ and $X^N _j$, $j\neq i$, above is that the former includes the strategy modification $u$. Any realization 
\[((\Omega,\mathcal{F},\Prb),\Phi^{N}_1,\ldots,\Phi^{N}_N,X^{N}_{1}(.),\ldots,X^{N}_{N}(.),\xi^{N}_{1}(.),\ldots,\xi^{N}_{N}(.))\] 
of the triple $(\mathfrak{m}^{N},\gamma^{N},u)$ for player $i$ can be interpreted in the following way. The random variables $X^{N}_{1}(0),\ldots,X^{N}_{N}(0)$ represent the initial states of players $1$ through $N$, and their joint distribution is given by $\mathfrak{m}^{N}$. The random variable $\Phi^{N}_l$ represents the recommendation (or signal) the mediator sends to player $l$ before the game starts. While the joint distribution of $\Phi^{N}_{1},\ldots,\Phi^{N}_N$, which is equal to $\gamma^{N}$, is common knowledge, no player can directly see the recommendations received by the others. This feature is made precise in the way the state dynamics are formulated in \eqref{PrelimitDynamics}: Player $i$, the player who might deviate, chooses a strategy by modifying the recommendation $\Phi^{N}_{i}$ through the application of a mapping $u\!: \mathcal{R}\rightarrow \mathcal{R}$, while the other players follow the recommendation they receive from the mediator. Player $i$ thus uses the random strategy $u\circ \Phi^{N}_{i}$ instead of simply $\Phi^{N}_{i}$. Clearly, $u\circ \Phi^{N}_{i}$ is $\sigma(\Phi^{N}_{i})$-measurable.

\begin{rem}
The independence assumption in \eqref{PrelimitRealizationIndependence} is crucial. Clearly, the vector $(\xi^{N}_{j}(t))$ of noise variables and the vector $(X^{N}_{j}(0))$ of initial states have to be independent. But we also require them to be independent of the vector $(\Phi^{N}_{j})$ of recommendation variables. This makes precise the idea that the mediator gives recommendations to the players before the game starts. Recall that what the mediator suggests are feedback strategies. If player $j$ accepts the mediator's recommendation, then, given a scenario $\omega\in \Omega$, he will use the feedback strategy $\Phi^{N}_{j}(\omega)$. In view of the dynamics according to \eqref{PrelimitDynamics}, he will therefore select, at any time $t$, the control action $\Phi^{N}_{j}(\omega)(t,X^{N}_{j}(t,\omega))$. The control action at time $t$ is thus in general not independent of the noise variables up to time $t$, nor is it independent of the initial states. An analogous observation holds for player $i$, who might modify the mediator's recommendation.
\end{rem}

The costs for player $i$ associated with initial distribution $\mathfrak{m}^{N}$, correlated profile $\gamma^{N}$, and a strategy modification $u$ are given by
\[
\begin{split}
	J^{N}_{i}\bigl( \mathfrak{m}^{N},\gamma^{N},u \bigr) \doteq \Mean\Biggl[ \sum_{t=0}^{T-1} f\left(t,X^{N}_{i}(t),\mu^{N}_{i}(t),u\circ \Phi^{N}_{i}\left(t,X^{N}_{i}(t)\right) \right) + F\left(X^{N}_{i}(T),\mu^{N}_{i}(T) \right) \Biggr],
\end{split}
\]
where the expected value on the right-hand side above is computed with respect to any realization of the triple $(\mathfrak{m}^{N},\gamma^{N},u)$ for player $i$. Thanks to the independence assumption \eqref{PrelimitRealizationIndependence}, the costs are well defined in that they do not depend on the choice of the realization.

\begin{defn} \label{DefPrelimitCE}
	Let $\epsilon \ge 0$. A correlated profile $\gamma^{N} \in \prbms{\mathcal{R}^N}$ is called an $\epsilon$-\emph{correlated equilibrium} in restricted strategies with initial distribution $\mathfrak{m}^N$ if for every $i\in \{1,\ldots,N\}$, every strategy modification $u \in \mathcal{U}$,
	\[
		J^{N}_{i}(\mathfrak{m}^{N},\gamma^{N},\Id) \leq J^{N}_{i}(\mathfrak{m}^{N},\gamma^{N},u) + \epsilon. 
	\] 
When $\epsilon = 0$, we say that $\gamma^{N}$ is a correlated equilibrium in restricted strategies.
\end{defn}

An $\epsilon$-correlated equilibrium is called \emph{symmetric} if it is symmetric as a probability measure on $\mathcal{R}^N$ (i.e.\ invariant under permutations of the components).

\begin{rem}
Nash equilibria are particular cases of correlated equilibria. According to Definition~\ref{DefPrelimitCE}, a Nash equilibrium in mixed strategies corresponds to a correlated profile $\gamma^{N}$ that has product form, while a Nash equilibrium in pure strategies corresponds to a correlated profile which is the product of Dirac measures concentrated in the strategies of the Nash profile. 
\end{rem}

Next, we prove that there always exists a symmetric correlated equilibrium for the $N$-player game. To this end, instead of relying on the existence of symmetric Nash equilibria, which would hold in this setting, we rather follow a more direct approach by applying the existence result in \citet{HS} through a simple symmetrization argument. 

\begin{prop} \label{PropExistence}
	Let $\mathfrak{m}^{N} \in \prbms{\mathcal{X}^{N}}$ be symmetric. Then there exists a symmetric correlated equilibrium with initial distribution $\mathfrak{m}^{N}$.
\end{prop}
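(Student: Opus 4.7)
The plan is to reduce the dynamic game to a finite game in strategic form, apply the Hart--Schmeidler existence theorem to obtain a (possibly non-symmetric) correlated equilibrium, and then symmetrize it by averaging over the permutation group, exploiting the symmetry of $\mathfrak{m}^{N}$ and of the state dynamics.

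\textbf{Step 1: reduction to a finite strategic-form game.} Since $\mathcal{R}$ is a finite set, I will view the $N$-player game as a finite normal-form game in which each player chooses an action in $\mathcal{R}$. Define, for every strategy vector $\phi = (\phi_{1},\ldots,\phi_{N}) \in \mathcal{R}^{N}$ and every $i$, the cost
\[
C^{N}_{i}(\phi) \doteq J^{N}_{i}\bigl(\mathfrak{m}^{N},\delta_{\phi},\Id\bigr),
\]
so that for any correlated profile $\gamma^{N}$ and any strategy modification $u\in\mathcal{U}$ one has
\[
J^{N}_{i}\bigl(\mathfrak{m}^{N},\gamma^{N},u\bigr) = \int_{\mathcal{R}^{N}} C^{N}_{i}\bigl(\phi_{1},\ldots,\phi_{i-1},u(\phi_{i}),\phi_{i+1},\ldots,\phi_{N}\bigr)\,d\gamma^{N}(\phi).
\]
Under this translation, Definition~\ref{DefPrelimitCE} is exactly the classical definition of a correlated equilibrium for a finite strategic-form game (with the canonical canonical deviations given by elements of $\mathcal{U}$). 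Hart and Schmeidler's existence theorem then yields some correlated profile $\gamma^{N}\in \prbms{\mathcal{R}^{N}}$ which is a correlated equilibrium in the sense of Definition~\ref{DefPrelimitCE}, but a priori not symmetric.

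\textbf{Step 2: symmetrization.} For $\sigma$ in the symmetric group $S_{N}$ let $\pi_{\sigma}\!:\mathcal{R}^{N}\to\mathcal{R}^{N}$ be the coordinate permutation $\pi_{\sigma}(\phi)_{j} = \phi_{\sigma^{-1}(j)}$, and set
\[
\bar{\gamma}^{N} \doteq \frac{1}{N!}\sum_{\sigma\in S_{N}} \gamma^{N}\circ \pi_{\sigma}^{-1}.
\]
By construction $\bar{\gamma}^{N}$ is symmetric. It remains to check that it is a correlated equilibrium. Because $\mathfrak{m}^{N}$ is symmetric, the i.i.d.\ idiosyncratic noises are exchangeable, and the dynamics depend on the other players only through their empirical measure $\mu^{N}_{l}(t)$, the cost functions satisfy the symmetry
\[
C^{N}_{i}\bigl(\pi_{\sigma}(\phi)\bigr) = C^{N}_{\sigma^{-1}(i)}(\phi),\qquad \phi\in\mathcal{R}^{N},\;\sigma\in S_{N}.
\]
Applying this identity also with $\phi_{i}$ replaced by $u(\phi_{i})$ (which only renames the deviating coordinate), a change of variable gives, for every $i$ and every $u\in\mathcal{U}$,
\[
J^{N}_{i}\bigl(\mathfrak{m}^{N},\gamma^{N}\circ\pi_{\sigma}^{-1},u\bigr) = J^{N}_{\sigma^{-1}(i)}\bigl(\mathfrak{m}^{N},\gamma^{N},u\bigr),
\]
and therefore
\[
J^{N}_{i}\bigl(\mathfrak{m}^{N},\bar{\gamma}^{N},u\bigr) = \frac{1}{N}\sum_{j=1}^{N} J^{N}_{j}\bigl(\mathfrak{m}^{N},\gamma^{N},u\bigr),
\]
the right-hand side being independent of $i$. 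The analogous identity holds with $u=\Id$. Since $\gamma^{N}$ is a correlated equilibrium, $J^{N}_{j}(\mathfrak{m}^{N},\gamma^{N},\Id) \leq J^{N}_{j}(\mathfrak{m}^{N},\gamma^{N},u)$ for every $j$ and every $u\in\mathcal{U}$; averaging over $j$ yields the correlated equilibrium property for $\bar{\gamma}^{N}$.

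\textbf{Anticipated obstacle.} The conceptual content is standard once the problem is cast in strategic form; the only subtlety I expect is bookkeeping in Step~2, namely verifying carefully that a coordinate permutation $\pi_{\sigma}$ inside the integral corresponds exactly to a relabelling of players on the level of the cost functional $J^{N}_{i}$, and that this relabelling commutes correctly with the insertion of the strategy modification $u$ at the $i$-th coordinate. This hinges on (a) the symmetry of $\mathfrak{m}^{N}$, (b) the exchangeability of the noise variables $\xi^{N}_{j}(t)$, and (c) the fact that the dynamics of player $j$ only see the other players through the (permutation-invariant) empirical measure; these three facts together are precisely what makes the rewriting legitimate.
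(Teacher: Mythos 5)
Your proposal is correct and follows essentially the same route as the paper: apply the Hart--Schmeidler existence theorem to the game viewed in strategic form over the finite action set $\mathcal{R}$, then symmetrize the resulting correlated profile by averaging over the permutation group and use the symmetry of $\mathfrak{m}^{N}$, the exchangeable noise, and the empirical-measure interaction to see that the equilibrium inequalities are preserved. Your Step~2 just spells out the relabelling identity $J^{N}_{i}(\mathfrak{m}^{N},\gamma^{N}\circ\pi_{\sigma}^{-1},u)=J^{N}_{\sigma^{-1}(i)}(\mathfrak{m}^{N},\gamma^{N},u)$ a bit more explicitly than the paper's change-of-variable computation, but the argument is the same.
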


\begin{proof}
Applying \citet[Theorem 1]{HS} to our setting, we obtain the existence of a correlated equilibrium $\gamma \in \mathcal P(\mathcal R^N)$ for the $N$-player game described above, i.e.\ for all $i \in \{1,\ldots,N\}$ and all $u \in \mathcal U$ we have
\begin{equation}\label{payoff-aux}
	\sum_{\varphi\in \mathcal R^N} \gamma(\varphi) \left(J^{N}_i (\mathfrak{m}^N,\delta_{\varphi} , u) - J^{N}_i (\mathfrak{m}^N,\delta_{\varphi}, \Id)\right) \ge 0.
\end{equation}
Since $\gamma$ is not necessarily symmetric, we symmetrize it by defining
\[ \tilde \gamma (\varphi^1, \ldots, \varphi^N) := \frac{1}{N!} \sum_\sigma \gamma(\varphi^{\sigma(1)},\ldots, \varphi^{(N)}),\]
where $\sigma$ varies in the set of all permutations of $\{1,\ldots,N\}$. We check that also $\tilde \gamma$ is a CE for the $N$-player game. Letting $\varphi^\sigma = (\varphi^{\sigma(1)},\ldots,\varphi^{\sigma(N)})$, we can write
\begin{eqnarray*}
	\sum_{\varphi\in \mathcal R^N} \tilde \gamma(\varphi) \left(J^{N}_i (\mathfrak{m}^N,\delta_{\varphi} , u) - J^{N}_i (\mathfrak{m}^N,\delta_{\varphi}, \Id)\right)\\
	= \frac{1}{N!} \sum_\sigma \sum_{\varphi\in \mathcal R^N} \gamma(\varphi^{\sigma(1)},\ldots,\varphi^{\sigma(N)}) \left(J^{N}_i (\mathfrak{m}^N,\delta_{\varphi} , u) - J^{N}_i (\mathfrak{m}^N,\delta_{\varphi}, \Id)\right)\\
	= \frac{1}{N!} \sum_\sigma \sum_{\varphi\in \mathcal R^N} \gamma(\varphi^\sigma ) \left(J^{N}_{\sigma(i)} (\mathfrak{m}^N,\delta_{\varphi^\sigma} , u) - J^{N}_{\sigma(i)} (\mathfrak{m}^N,\delta_{\varphi^\sigma}, \Id)\right) \ge 0,
\end{eqnarray*}
where the second equality is due to symmetry and the final inequality follows from \eqref{payoff-aux}.
\end{proof}

%-----

\section{The correlated mean field game} \label{SectMFG}

Choose $\mathfrak{m}_{0} \in \prbms{\mathcal{X}}$, the distribution of the representative player's state at time zero. Let $\rho\in \prbms{\mathcal{R} \times \prbms{\mathcal{X}}^{T+1}}$ be a correlated flow, and let $u\in \mathcal{U}$ be a strategy modification.

A tuple $((\Omega,\mathcal{F},\Prb),\Phi,X(.),\mu(.),\xi(.))$ is called a \emph{realization} of the triple $(\mathfrak{m}_{0},\rho,u)$ if $\Phi$ is an $\mathcal{R}$-valued random variable, $X(0),\ldots,X(T)$ are $\mathcal{X}$-valued random variables, $\mu(0),\ldots,\mu(T)$ are $\prbms{\mathcal{X}}$-valued random variables, and $\xi(1),\ldots,\xi(T)$ are $\mathcal{Z}$-valued random variables all defined on a common probability space $(\Omega,\mathcal{F},\Prb)$ such that
\begin{enumerate}[(i)]
	
	\item $\Prb\circ (X(0))^{-1} = \mathfrak{m}_{0}$;
		
	\item\label{LimitRealizationCorrelFlow} $\Prb\circ (\Phi,\mu(0),\ldots,\mu(T))^{-1} = \rho$;
	
	\item $\xi(t)$, $t\in \{1,\ldots,T\}$, are i.i.d.\ with common distribution $\Prb\circ (\xi(t))^{-1} = \nu$;
	
	\item\label{LimitRealizationIndependence} $\xi(.)$, $X(0)$, and $(\Phi, \mu(.))$ are independent as random variables with values in $\mathcal{Z}^{T}$, $\mathcal{X}$, and $\mathcal{R} \times \prbms{\mathcal{X}}^{T+1}$, respectively;

	\item $\Prb$-almost surely, for every $t\in \{0,\ldots,T-1\}$,
	\begin{equation} \label{EqLimitDyn}
		X(t+1) = \Psi\left(t,X (t),\mu(t),u\circ\Phi\left(t,X (t)\right), \xi(t+1) \right).
	\end{equation}

\end{enumerate}

Recalling the heuristic connection between $N$-player games and mean field game, we can interpret a realization $((\Omega,\mathcal{F},\Prb),\Phi,X(.),\mu(.),\xi(.))$ of the triple $(\mathfrak{m}_{0},\rho,u)$ as follows. The random variable $\Phi$ represents the recommendation that one representative player receives from the mediator, whereas $X(.)$ gives the representative player's state sequence, which is recursively determined through Eq.~\eqref{EqLimitDyn}. There, $\xi(t)$, $t\in \{1,\ldots,T\}$, are the noise variables, while $\mu(.)$ represents a stochastic flow of measures. 

\begin{rem}
	The flow of measures $\mu(.)$ should be thought of as a limit point of the $N$-player flows of empirical measures. As such, it will in general be stochastic and not independent of the recommendation variable $\Phi$, which in turn should be thought of as a limit point of the recommendation variables for one fixed player, say the first, in the $N$-player games. It is therefore necessary to prescribe the joint distribution of $\Phi$ and $\mu$, as done in \eqref{LimitRealizationCorrelFlow} through the correlated flow $\rho$. Similarly, in \eqref{LimitRealizationIndependence}, which should be compared to the independence assumption \eqref{PrelimitRealizationIndependence} of the $N$-player games, we require independence of $\xi(.)$, $X(0)$, and $(\Phi, \mu(.))$, not just of $\xi(.)$, $X(0)$, and $\Phi$. We stress that in general $\Phi$ and $\mu$ will not be independent. 
\end{rem}

The costs for a representative player associated with initial distribution $\mathfrak{m}_{0}$, correlated flow $\rho$, and a strategy modification $u\!: \mathcal{R} \rightarrow \mathcal{R}$ are given by
\[
\begin{split}
	J(\mathfrak{m}_{0},\rho,u) \doteq \Mean\Biggl[ \sum_{t=0}^{T-1} f\left(t,X (t),\mu(t),u\circ \Phi\left(t,X (t)\right) \right) + F\left(X(T),\mu(T) \right) \Biggr],
\end{split}
\]
where the expected value on the right-hand side above is computed with respect to any realization of $(\mathfrak{m}_{0},\rho,u)$. Thanks to \eqref{LimitRealizationIndependence}, any two realizations of $(\mathfrak{m}_{0},\rho,u)$ generate the same expected value. The cost functional $J$ is thus well defined.

\begin{defn} \label{DefCE}
	A correlated flow $\rho \in \prbms{\mathcal{R} \times \prbms{\mathcal{X}}^{T+1}}$ is called a \emph{correlated solution of the mean field game} in restricted strategies with initial distribution $\mathfrak{m}_{0}$ if the following two conditions hold:
\begin{enumerate}[(i)]
	\item Optimality: For every strategy modification $u \in \mathcal{U}$,
	\[
		J(\mathfrak{m}_{0};\rho,\Id) \leq J(\mathfrak{m}_{0};\rho,u).
	\]

	\item Consistency: If $((\Omega,\mathcal{F},\Prb),\Phi,X(.),\mu(.),\xi(.))$ is a realization of the triple $(\mathfrak{m}_{0},\rho,\Id)$, then for every $t\in \{0,\ldots,T\}$,
	\[
		\mu(t) (.) = \Prb\left( X(t)\in . \;|\; \mathcal{F}^{\mu} \right),
	\]
	where $\mathcal{F}^{\mu} \doteq \sigma(\mu) = \sigma(\mu(s):s \in \{0,\ldots, T\})$.
\end{enumerate}
\end{defn}

The consistency condition in Definition~\ref{DefCE} is to be understood in the sense that $\mu(t)$ is a regular conditional distribution of $X(t)$ given  $\mathcal{F}^{\mu}$. Notice that $\mathcal{F}^{\mu}$ is the $\sigma$-algebra generated by the entire flow of measures $\mu$, up to terminal time $T$. Intuitively, the consistency condition can be interpreted as follows: The moderator has an idea of the flow $\mu$ on the whole time interval, on the basis of which he recommends strategies to the players. If each player follows his recommendations, then that flow $\mu$ will arise from aggregation of the individual behaviors.

\begin{rem}
Definition~\ref{DefCE} should be compared to the definition of weak \mbox{MFG} solution, more precisely \emph{weak semi-Markov mean field equilibrium}, given
in \citet[][Definition~2.5]{lacker18}. An obvious difference lies in the dynamics: While Lacker works with controlled It{\^o} diffusions driven by non-degenerate additive Wiener noise (as here, idiosyncratic, no common noise), here we consider simple discrete time dynamics with finite state and control space. Conceptually more important is the fact that the admissible strategies here are restricted to functions that depend only on time and the player's current state, while Lacker allows for an additional dependence on the flow of measures up to current time. Notice that the flow of measures may be stochastic in both cases. Clearly, there is no mediator or correlation device in \citet{lacker18}. If in our situation we take the recommendation variable to be (almost surely) constant, hence with Dirac distribution concentrated at some feedback strategy $\phi \in \mathcal{R}$, then the optimality condition in Definition~\ref{DefCE} above can be seen to be analogous to the optimality condition in Lacker's definition (point (5) there). His consistency condition (point (6) there) is apparently different in that the conditional distribution is taken with respect to the $\sigma$-algebra generated by the flow of measures up to current time, not up to terminal time as in our definition. However, if the recommendation variable is (almost surely) constant, or absent as in Lacker's work, then the two ways of conditioning lead to equivalent consistency conditions, thanks to the (semi-)Markov property of the state process. Indeed, when $\Phi$ is constant, $X(t)$ and $\mu$ are conditionally independent given $\mathcal F^\mu (t) = \sigma(\mu(s): s\in \{0,\ldots,t\})$, for all $t\in \{0,\ldots, T\}$. Therefore, in this case, the property $\mu(t) (.) = \mathbf P(X(t) \in . \; |\; \mathcal F^\mu (t))$, $t\in \{0,\ldots, T\}$, implies the consistency condition (ii) above. In this way, and under the simplifying assumptions made here, one can interpret weak \mbox{MFG} solutions as a special case of correlated solutions.
\end{rem}

\section{Example of a correlated mean field game} \label{SectMFGExample}

In this section, we give an example of a two-state mean field game possessing correlated solutions with non-deterministic flow of measures.

Set $\mathcal{X} \doteq \{-1, 1\}$, $\Gamma \doteq \{0,1\}$, and $T \doteq 2$. Define the system function $\Psi$ by
\[
	\Psi(t,x,m,\gamma,z) \doteq \Psi(x,\gamma,z) \doteq \begin{cases}
		x &\text{if } \gamma = 0 \text{ and } z\in [0,\frac{1}{2}], \\
		-x &\text{if } \gamma = 0 \text{ and } z\in (\frac{1}{2},1], \\
		x &\text{if } \gamma = 1 \text{ and } z\in [0,\frac{3}{4}], \\
		-x &\text{if } \gamma = 1 \text{ and } z\in (\frac{3}{4},1],
	\end{cases}
\]
Notice that $\Psi$ is time-homogeneous and independent of the measure variable, which justifies our slight abuse of notation. According to $\Psi$, when moving one step in time, the player's state switches with probability $1/2$ if action $\gamma = 0$ is chosen, while it will change only with probability $1/4$ if action $\gamma = 1$ is played. Choose running costs $f$ and terminal costs $F$ according to
\begin{align*}
	& f(t,x,m,\gamma) \doteq \begin{cases}
	c_{0}\cdot \gamma &\text{if } t = 0, \\
	c_{1}\cdot \gamma -x\cdot \mathbf{M}(m) &\text{if } t = 1,
	\end{cases} & 
	& F(x,m) \doteq -x\cdot \mathbf{M}(m), &
\end{align*}
where $c_{0}, c_{1} > 0$ will be chosen below and $\mathbf{M}(m) \doteq m(\{1\}) - m(\{-1\})$ is the mean of a probability measure $m \in \prbms{\{-1, 1\}}$.

Define strategies $\phi_{+}, \hat{\phi}_{+}, \phi_{-}, \hat{\phi}_{-}, \phi_{o} \in \mathcal{R}$ according to
\begin{align*}
& \phi_{+}(t,x) \doteq \begin{cases}
	0 &\text{if } x = -1 \\
	1 &\text{if } x = 1,
\end{cases} &
& \hat{\phi}_{+}(t,x) \doteq \begin{cases}
0 &\text{if } x = -1 \text{ or } t = 1, \\
1 &\text{if } x = 1 \text{ and } t = 0,
\end{cases} & && \\
& \phi_{-}(t,x) \doteq \begin{cases}
0 &\text{if } x = 1, \\
1 &\text{if } x = -1,
\end{cases} &
& \hat{\phi}_{-}(t,x) \doteq \begin{cases}
0 &\text{if } x = 1 \text{ or } t = 1, \\
1 &\text{if } x = -1 \text{ and } t = 0,
\end{cases} & \\
& \phi_{o}(t,x) \doteq 0, & & t\in \{0,1\},\; x\in \{-1,1\}. &
\end{align*}
Strategy $\phi_{+}$ has the effect of maximizing the probability of being in state $1$ at times $1$ and $2$, while $\hat{\phi}_{+}$ only maximizes that probability at time $1$. The effect of $\phi_{-}$, $\hat{\phi}_{-}$ is analogous, with the roles of states $1$ and $-1$ inverted. Under strategy $\phi_{o}$ (``do nothing''), on the other hand, the two states will have equal probability at both time $1$ and time $2$, independently of the initial state.

Choose $\beta_{1},\dots,\beta_{4} > 0$ such that
\begin{align} \label{EqExmplBeta}
	& \beta_{1} + \beta_{2} + \beta_{3} + \beta_{4} = \frac{1}{2}, & & \frac{5\beta_{1} + 4\beta_{2}}{8(\beta_{1}+\beta_{2})} = \frac{5\beta_{3} + 4\beta_{4}}{8(\beta_{3}+\beta_{4})}. & 
\end{align}
An obvious choice satisfying \eqref{EqExmplBeta} is to set $\beta_{i} \doteq 1/8$ for all $i\in \{1,\ldots,4\}$. Define distributions in $\prbms{\{-1, 1\}}$ according to
\begin{align*}
	& m_{0} \doteq \frac{1}{2}\delta_{1} + \frac{1}{2}\delta_{-1}, & && \\
	& m_{1}^{+} \doteq \frac{5\beta_{1} + 4\beta_{2}}{8(\beta_{1} + \beta_{2})} \delta_{1} + \frac{3\beta_{1} + 4\beta_{2}}{8(\beta_{1} + \beta_{2})} \delta_{-1}, &
	& m_{2}^{+} \doteq \frac{21\beta_{1} + 16\beta_{2}}{32(\beta_{1} + \beta_{2})} \delta_{1} + \frac{11\beta_{1} + 16\beta_{2}}{32(\beta_{1} + \beta_{2})} \delta_{-1}, & \\
	& m_{1}^{-} \doteq \frac{3\beta_{1} + 4\beta_{2}}{8(\beta_{1} + \beta_{2})} \delta_{1} + \frac{5\beta_{1} + 4\beta_{2}}{8(\beta_{1} + \beta_{2})} \delta_{-1}, &
	& m_{2}^{-} \doteq \frac{11\beta_{1} + 16\beta_{2}}{32(\beta_{1} + \beta_{2})} \delta_{1} + \frac{21\beta_{1} + 16\beta_{2}}{32(\beta_{1} + \beta_{2})}\delta_{-1}. &
\end{align*}

In terms of the above distributions and strategies, define a correlated flow $\rho$ by
\[
\begin{split}
	\rho &\doteq 
	\beta_{1} \delta_{(\phi_{+},(m_{0},m_{1}^{+},m_{2}^{+}))}
	+ \beta_{2} \delta_{(\phi_{o},(m_{0},m_{1}^{+},m_{2}^{+}))}
	+ \beta_{3} \delta_{(\hat{\phi}_{+},(m_{0},m_{1}^{+},m_{0}))}
	+ \beta_{4} \delta_{(\phi_{o},(m_{0},m_{1}^{+},m_{0}))} \\
	&+ \beta_{1} \delta_{(\phi_{-},(m_{0},m_{1}^{-},m_{2}^{-}))}
	+ \beta_{2} \delta_{(\phi_{o},(m_{0},m_{1}^{-},m_{2}^{-}))}
	+ \beta_{3} \delta_{(\hat{\phi}_{-},(m_{0},m_{1}^{-},m_{0}))}
	+ \beta_{4} \delta_{(\phi_{o},(m_{0},m_{1}^{-},m_{0}))}.
\end{split}
\]

In Figure~\ref{FigTrajectories}, we illustrate the four measure trajectories that have strictly positive probability according to $\rho$, namely $(m_{0},m_{1}^{+},m_{2}^{+})$, $(m_{0},m_{1}^{+},m_{0})$, $(m_{0},m_{1}^{-},m_{2}^{-})$, and $(m_{0},m_{1}^{-},m_{0})$. In analogy with the classical definition of correlated equilibria, we suppose that $\rho$, which here gives the joint distribution of the mediator's recommendations and the flow of measures, is common knowledge. Thus, if the representative player receives the recommendation to play $\phi_{+}$, she can infer that with probability one the flow of measures will be concentrated at the measure trajectory $(m_{0},m_{1}^{+},m_{2}^{+})$. Similarly, upon receiving recommendation $\hat{\phi}_{+}$, she can deduce that the flow of measures will be concentrated at $(m_{0},m_{1}^{+},m_{0})$, and analogously for $\phi_{-}$, $\hat{\phi}_{-}$. If, on the other hand, the mediator recommends to play  $\phi_{o}$, then all four measure trajectories are possible. In fact, $(m_{0},m_{1}^{+},m_{2}^{+})$ and $(m_{0},m_{1}^{-},m_{2}^{-})$ will both have (conditional) probability equal to $\frac{\beta_{2}}{2(\beta_{2}+\beta_{4})}$, while $(m_{0},m_{1}^{+},m_{0})$ and $(m_{0},m_{1}^{-},m_{0})$ will both have (conditional) probability equal to $\frac{\beta_{4}}{2(\beta_{2}+\beta_{4})}$; also see Remark~\ref{RemCorrelatedExmpl} below.

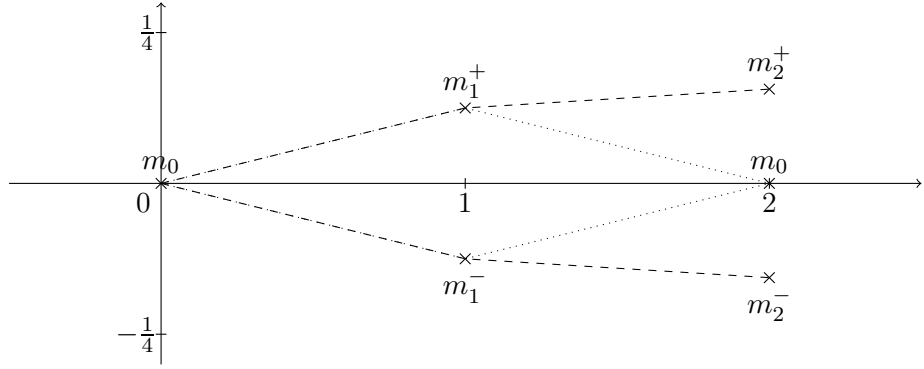
\begin{figure}
\begin{center}
\begin{tikzpicture}[xscale=4, yscale=8]

\tikzset{
	crocetta/.pic=
	{
		\draw [rotate=45] (0,-0.1) -- (0,0.1);
		\draw [rotate=-45] (0,-0.1) -- (0,0.1);
	}
}

	\draw[->] (-0.5,0) -- (2.5,0);
	\draw[->] (0,-0.3) -- (0,0.3);
	
	\foreach \x in {1,2}
		\draw (\x,-0.25pt) -- (\x,0.25pt);

	\draw (0,0) node[anchor=north east] {0};
	\foreach \x in {1,2}
		\draw (\x,0) node[anchor=north] {\x};

	\draw (-0.5pt,0.25) -- (0.5pt,0.25) node[anchor=east] {$\frac{1}{4}$};
	\draw (-0.5pt,-0.25) -- (0.5pt,-0.25) node[anchor=east] {$-\frac{1}{4}$};

	\draw[dashed] (0,0) -- (1,0.125) node[anchor=south] {$m^{+}_{1}$} -- (2,0.15625) node[anchor=south] {$m^{+}_{2}$};
	
	\draw[dotted] (0,0) node[anchor=south] {$m_{0}$} -- (1,0.125) -- (2,0) node[anchor=south] {$m_{0}$};
	
	\draw[dashed] (0,0) -- (1,-0.125) node[anchor=north] {$m^{-}_{1}$} -- (2,-0.15625) node[anchor=north] {$m^{-}_{2}$};
	
	\draw[dotted] (0,0) -- (1,-0.125) -- (2,0);
	
	\draw (0,0) pic {crocetta};
	\draw (1,0.125) pic {crocetta};
	\draw (1,-0.125) pic {crocetta};
	\draw (2,0.15625) pic {crocetta};
	\draw (2,-0.15625) pic {crocetta};
	\draw (2,0) pic {crocetta};
	
\end{tikzpicture}

\caption{Measure trajectories with strictly positive probability according to the correlated flow $\rho$. Elements of $\mathcal{P}(\{-1,1\})$ are identified with their mean, measured along the vertical axis. Values have been computed for the case $\beta_{1} = \beta_{2} = \beta_{3} = \beta_{4} = 1/8$. Time is measured along the horizontal axis. Only times $0$, $1$, $2$ are relevant for the model, though for the sake of illustration trajectories are represented as if time were continuous.}\label{FigTrajectories}
\end{center}
\end{figure}

We are going to show that $\rho$ is a correlated solution of the mean field game in the sense of Definition~\ref{DefCE} provided $c_{0}, c_{1} > 0$ are taken sufficiently small. To this end, let $((\Omega,\mathcal{F},\Prb),\Phi,X(.),\mu(.),\xi(.))$ be a realization of the triple $(\mathfrak{m}_{0},\rho,\Id)$. Then, for every $t\in \{0,1,2\}$, $\Prb$-almost surely,
\[
	\Prb\left( X(t)\in \cdot \;|\; \mathcal{F}^{\mu} \right) = \begin{cases}
	\frac{\beta_{1}}{\beta_{1} + \beta_{2}}\Prb\circ (X_{+}(t))^{-1} + \frac{\beta_{2}}{\beta_{1} + \beta_{2}}\Prb\circ (X_{o}(t))^{-1} &\text{if } \mu = (m_{0},m_{1}^{+},m_{2}^{+}), \\
	\frac{\beta_{3}}{\beta_{3} + \beta_{4}}\Prb\circ (\hat{X}_{+}(t))^{-1} + \frac{\beta_{4}}{\beta_{3} + \beta_{4}}\Prb\circ (X_{o}(t))^{-1} &\text{if } \mu = (m_{0},m_{1}^{+},m_{0}), \\
	\frac{\beta_{1}}{\beta_{1} + \beta_{2}}\Prb\circ (X_{-}(t))^{-1} + \frac{\beta_{2}}{\beta_{1} + \beta_{2}}\Prb\circ (X_{o}(t))^{-1} &\text{if } \mu = (m_{0},m_{1}^{-},m_{2}^{-}), \\
	\frac{\beta_{3}}{\beta_{3} + \beta_{4}}\Prb\circ (\hat{X}_{-}(t))^{-1} + \frac{\beta_{4}}{\beta_{3} + \beta_{4}}\Prb\circ (X_{o}(t))^{-1} &\text{if } \mu = (m_{0},m_{1}^{-},m_{0}),
	\end{cases}
\]
where $X_{o}$, $X_{+}$, $\hat{X}_{+}$, $X_{-}$, $\hat{X}_{-}$ are the state processes that result from applying feedback strategies $\phi_{o}$, $\phi_{+}$, $\hat{\phi}_{+}$, $\phi_{-}$, and $\hat{\phi}_{-}$, respectively, with initial distribution $m_{0}$. Notice that these processes can be recursively defined through $\Psi$ on the given probability space $(\Omega,\mathcal{F},\Prb)$ in terms of the noise variables $\xi(1)$, $\xi(2)$, and the initial state $X(0)$. For instance, $X_{+}(.)$ is recursively determined by setting
\begin{align*} 
	& X_{+}(0) \doteq X(0), &  & X_{+}(t+1) = \Psi\left(X_{+} (t), \phi_{+}\left(t,X_{+} (t)\right), \xi(t+1) \right), &  & t \in \{0,1\}, & 
\end{align*}
where $\Psi$ is seen as a function of state, control, and noise only, according to its definition above. Recalling that $m_{0} = \frac{1}{2}\delta_{1} + \frac{1}{2}\delta_{-1}$, we find:
\begin{align*}
	& \Prb\circ (X_{o}(t))^{-1} = m_{0}, & &t\in \{0,1,2\}, & && \\
	& \Prb\circ (X_{+}(0))^{-1} = m_{0}, &
	& \Prb\circ (X_{+}(1))^{-1} = \frac{5}{8}\delta_{1} + \frac{3}{8}\delta_{-1}, &
	& \Prb\circ (X_{+}(2))^{-1} = \frac{21}{32}\delta_{1} + \frac{11}{32}\delta_{-1}, & \\
	& \Prb\circ (\hat{X}_{+}(0))^{-1} = m_{0}, &
	& \Prb\circ (\hat{X}_{+}(1))^{-1} = \frac{5}{8}\delta_{1} + \frac{3}{8}\delta_{-1}, & & \Prb\circ (\hat{X}_{+}(2))^{-1} = m_{0}, & \\
	& \Prb\circ (X_{-}(0))^{-1} = m_{0}, & & \Prb\circ (X_{-}(1))^{-1} = \frac{3}{8}\delta_{1} + \frac{5}{8}\delta_{-1}, & & \Prb\circ (X_{-}(2))^{-1} = \frac{11}{32}\delta_{1} + \frac{21}{32}\delta_{-1}, & \\
	& \Prb\circ (\hat{X}_{-}(0))^{-1} = m_{0}, & & \Prb\circ (\hat{X}_{-}(1))^{-1} = \frac{3}{8}\delta_{1} + \frac{5}{8}\delta_{-1}, & & \Prb\circ (\hat{X}_{-}(2))^{-1} = m_{0}. &
\end{align*}
It follows that for $\Prb$-almost all $\omega \in \Omega$,
\begin{align*}
\Prb\left( X(0)\in \cdot \;|\; \mathcal{F}^{\mu} \right)(\omega) &= m_{0}, \\
\Prb\left( X(1)\in \cdot \;|\; \mathcal{F}^{\mu} \right)(\omega) &= \begin{cases}
	\frac{5\beta_{1} + 4\beta_{2}}{8(\beta_{1} + \beta_{2})} \delta_{1} + \frac{3\beta_{1} + 4\beta_{2}}{8(\beta_{1} + \beta_{2})} \delta_{-1} &\text{if } \mu_{\omega} = (m_{0},m_{1}^{+},m_{2}^{+}), \\
	\frac{5\beta_{3} + 4\beta_{4}}{8(\beta_{3} + \beta_{4})} \delta_{1} + \frac{3\beta_{3} + 4\beta_{4}}{8(\beta_{3} + \beta_{4})} \delta_{-1} &\text{if } \mu_{\omega} = (m_{0},m_{1}^{+},m_{0}), \\
	\frac{3\beta_{1} + 4\beta_{2}}{8(\beta_{1} + \beta_{2})} \delta_{1} + \frac{5\beta_{1} + 4\beta_{2}}{8(\beta_{1} + \beta_{2})} \delta_{-1} &\text{if } \mu_{\omega} = (m_{0},m_{1}^{-},m_{2}^{-}), \\
	\frac{3\beta_{3} + 4\beta_{4}}{8(\beta_{3} + \beta_{4})} \delta_{1} + \frac{5\beta_{3} + 4\beta_{4}}{8(\beta_{3} + \beta_{4})} \delta_{-1} &\text{if } \mu_{\omega} = (m_{0},m_{1}^{-},m_{0}),
\end{cases} \\
\Prb\left( X(2)\in \cdot \;|\; \mathcal{F}^{\mu} \right)(\omega) &= \begin{cases}
	\frac{21\beta_{1} + 16\beta_{2}}{32(\beta_{1} + \beta_{2})} \delta_{1} + \frac{11\beta_{1} + 16\beta_{2}}{32(\beta_{1} + \beta_{2})} \delta_{-1} &\text{if } \mu_{\omega} = (m_{0},m_{1}^{+},m_{2}^{+}), \\
	\frac{\beta_{3} + \beta_{4}}{2(\beta_{3} + \beta_{4})} \delta_{1} + \frac{\beta_{3} + \beta_{4}}{2(\beta_{3} + \beta_{4})} \delta_{-1} &\text{if } \mu_{\omega} = (m_{0},m_{1}^{+},m_{0}), \\
	\frac{11\beta_{1} + 16\beta_{2}}{32(\beta_{1} + \beta_{2})} \delta_{1} + \frac{21\beta_{1} + 16\beta_{2}}{32(\beta_{1} + \beta_{2})} \delta_{-1} &\text{if } \mu_{\omega} = (m_{0},m_{1}^{-},m_{2}^{-}), \\
	\frac{\beta_{3} + \beta_{4}}{2(\beta_{3} + \beta_{4})} \delta_{1} + \frac{\beta_{3} + \beta_{4}}{2(\beta_{3} + \beta_{4})} \delta_{-1} &\text{if } \mu_{\omega} = (m_{0},m_{1}^{-},m_{0}).
\end{cases}
\end{align*}
Thanks to the choice of $\beta_{i}$, $i\in \{1,\ldots,4\}$, according to \eqref{EqExmplBeta}, we find that the consistency condition of Definition~\ref{DefCE} is satisfied.

As to optimality, let $u \in \mathcal{U}$ be any strategy modification. Since $\Phi$ takes values in $\{\phi_{o}, \phi_{+}, \hat{\phi}_{+}, \phi_{-}, \hat{\phi}_{-}\}$ with probability one, we set
\begin{align*}
	& \psi_{o} \doteq u(\phi_{o}), & & \psi_{+} \doteq u(\phi_{+}), & & \hat{\psi}_{+} \doteq u(\hat{\phi}_{+}), & & \psi_{-} \doteq u(\phi_{-}), & & \hat{\psi}_{-} \doteq u(\hat{\phi}_{-}). &
\end{align*}  
Let $Y_{o}$, $Y_{+}$, $\hat{Y}_{+}$, $Y_{-}$, $\hat{Y}_{-}$ be the corresponding state processes, all starting from $X(0)$, hence with initial distribution $m_{0}$. 
Using that $\mathbf{M}(m_{0}) = 0$, while $\mathbf{M}(m_{t}^{+}) = -\mathbf{M}(m_{t}^{-})$, $t \in \{1, 2\}$, we obtain
\begin{align*}
	&J(\mathfrak{m}_{0};\rho,u) \\
&= \begin{aligned}[t]
	\Bigl(& c_{0}\cdot \Prb(\psi_{+}(0,Y_{+}(0)) = 1) + c_{1}\cdot \Prb(\psi_{+}(1,Y_{+}(1)) = 1) + \left( 1 - 2\Prb(Y_{+}(1) = 1) \right)\cdot \mathbf{M}(m_{1}^{+}) \\
	& + \left(1 - 2\Prb(Y_{+}(2) = 1) \right)\cdot \mathbf{M}(m_{2}^{+}) \Bigr)\cdot \beta_{1}
\end{aligned}\\
	&+ \begin{aligned}[t]
	\Bigl(& c_{0}\cdot \Prb(\hat{\psi}_{+}(0,\hat{Y}_{+}(0)) = 1) + c_{1}\cdot \Prb(\hat{\psi}_{+}(1,\hat{Y}_{+}(1)) = 1) \\
	&+ \left( 1 - 2\Prb(\hat{Y}_{+}(1) = 1) \right)\cdot \mathbf{M}(m_{1}^{+}) \Bigr)\cdot \beta_{3}
\end{aligned}\\
&+ \begin{aligned}[t]
	\Bigl(& c_{0}\cdot \Prb(\psi_{-}(0,Y_{-}(0)) = 1) + c_{1}\cdot \Prb(\psi_{-}(1,Y_{-}(1)) = 1) + \left( 1 - 2\Prb(Y_{-}(1) = 1) \right)\cdot \mathbf{M}(m_{1}^{-}) \\
	&+ \left(1 - 2\Prb(Y_{-}(2) = 1) \right)\cdot \mathbf{M}(m_{2}^{-}) \Bigr)\cdot \beta_{1}
\end{aligned}\\
&+ \begin{aligned}[t]
	\Bigl(& c_{0}\cdot \Prb(\hat{\psi}_{-}(0,\hat{Y}_{-}(0)) = 1) + c_{1}\cdot \Prb(\hat{\psi}_{-}(1,\hat{Y}_{-}(1)) = 1) \\
	&+ \left( 1 - 2\Prb(\hat{Y}_{-}(1) = 1) \right)\cdot \mathbf{M}(m_{1}^{-}) \Bigr) \cdot \beta_{3}
\end{aligned}\\
	&+ \left( c_{0}\cdot \Prb(\psi_{o}(0,Y_{o}(0)) = 1) + c_{1}\cdot \Prb(\psi_{o}(1,Y_{o}(1)) = 1)  \right)\cdot (2\beta_{2} + 2\beta_{4}).
\end{align*}
The last line above shows that if $c_{0}, c_{1} > 0$, then taking $\psi_{o} = \phi_{o}$ is optimal on the event $\{ \Phi = \phi_{o} \}$ since only in this case $\Prb(\psi_{o}(t,Y_{o}(t)) = 1) = 0$ for $t \in \{0,1 \}$. By symmetry of construction, it remains to show that $\phi_{+}$ is optimal on the event $\{ \Phi = \phi_{+} \}$ and $\hat{\phi}_{+}$ on $\{ \Phi = \hat{\phi}_{+} \}$ provided $c_{0}, c_{1} > 0$ are sufficiently small.

In verifying optimality, we will make use of the principle of dynamic programming. First, to show that $\phi_{+}$ is optimal on the event $\{ \Phi = \phi_{+} \}$, set, for $x\in \{-1,1\}$,
\begin{align*}
	V_{+}(2,x) &\doteq F(x,m_{2}^{+}) = -x\cdot \mathbf{M}(m_{2}^{+}), \\
	V_{+}(1,x) &\doteq \min_{\gamma\in \{0,1\}} \left\{ c_{1}\cdot \gamma - x\cdot \mathbf{M}(m_{1}^{+}) + \Mean\left[ V_{+}\bigl(2,\Psi(x,\gamma,\xi(2))\bigr)  \right] \right\}, \\
	V_{+}(0,x) &\doteq \min_{\gamma\in \{0,1\}} \left\{ c_{0}\cdot \gamma + \Mean\left[ V_{+}\bigl(1,\Psi(x,\gamma,\xi(1))\bigr)  \right] \right\}.
\end{align*}
As we are interested in finding optimal control actions, in defining $V_{+}$ we have omitted the weight factor $\beta_{1}$. This corresponds to computing costs with respect to the conditional probability $\Prb(\, .\,|\, \Phi = \phi_{+})$. Notice that $V_{+}$ is the value function of the optimal control problem the representative player faces when being told to play $\phi_{+}$ by the mediator. We have
\begin{align*}
	V_{+}(2,1) = -\frac{5\beta_{1}}{16(\beta_{1} + \beta_{2})},& & V_{+}(2,-1) = \frac{5\beta_{1}}{16(\beta_{1} + \beta_{2})} & 
\end{align*}
by choice of $m_{2}^{+}$, hence, also recalling $m_{1}^{+}$ and the definition of $\Psi$,
\begin{align*}
V_{+}(1,1) &= \begin{aligned}[t]
	 \min \Bigl\{&-\frac{\beta_{1}}{4(\beta_{1} + \beta_{2})} + \frac{1}{2} V_{+}(2,1) + \frac{1}{2} V_{+}(2,-1), \\
	&c_{1} -  \frac{\beta_{1}}{4(\beta_{1} + \beta_{2})} + \frac{3}{4} V_{+}(2,1) + \frac{1}{4} V_{+}(2,-1) \bigr) \Bigr\}
\end{aligned} \\
	&= \min \left\{-\frac{\beta_{1}}{4(\beta_{1} + \beta_{2})},\;
	c_{1} -  \frac{\beta_{1}}{4(\beta_{1} + \beta_{2})} - \frac{5\beta_{1}}{32(\beta_{1} + \beta_{2})} \right\}, \\
V_{+}(1,-1) &= \begin{aligned}[t]
	\min \Bigl\{&\frac{\beta_{1}}{4(\beta_{1} + \beta_{2})} + \frac{1}{2} V_{+}(2,1) + \frac{1}{2} V_{+}(2,-1), \\
	&c_{1} + \frac{\beta_{1}}{4(\beta_{1} + \beta_{2})} + \frac{1}{4} V_{+}(2,1) + \frac{3}{4} V_{+}(2,-1) \Bigr\}
\end{aligned} \\
	&= \min \left\{\frac{\beta_{1}}{4(\beta_{1} + \beta_{2})},\;
	c_{1} + \frac{\beta_{1}}{4(\beta_{1} + \beta_{2})} + \frac{5\beta_{1}}{32(\beta_{1} + \beta_{2})} \right\}.
\end{align*}
Above, the first expression inside the min corresponds to control action $\gamma = 0$, the second to $\gamma = 1$. We see that when being in state $x = -1$ at time $t = 1$, it is optimal to choose $\gamma = 0$, while when being in state $x = 1$, it is optimal to choose $\gamma = 1$ provided that
\begin{equation} \label{EqExmplC1}
	0 < c_{1} < \frac{5\beta_{1}}{32(\beta_{1} + \beta_{2})}.
\end{equation}
Assume from now on that \eqref{EqExmplC1} holds. Then $\phi_{+}$ gives the optimal control actions at time $t = 1$. As to time $t = 0$, we have
\begin{align*}
	V_{+}(0,1) &= \min \left\{ \frac{1}{2} V_{+}(1,1) + \frac{1}{2} V_{+}(1,-1),\;
c_{0} + \frac{3}{4} V_{+}(1,1) + \frac{1}{4} V_{+}(1,-1) \right\} \\
	&= \min \left\{  -\frac{1}{2}\left( \frac{5\beta_{1}}{32(\beta_{1} + \beta_{2})} - c_{1} \right),\;
	c_{0} - \frac{3}{4}\left(\frac{5\beta_{1}}{32(\beta_{1} + \beta_{2})} - c_{1} \right) - \frac{\beta_{1}}{8(\beta_{1} + \beta_{2})} \right\}, \\
	V_{+}(0,-1) &= \min \left\{\frac{1}{2} V_{+}(1,1) + \frac{1}{2} V_{+}(1,-1),\;
c_{0} + \frac{1}{4} V_{+}(1,1) + \frac{3}{4} V_{+}(1,-1) \bigr) \right\} \\
	&= \min \left\{  -\frac{1}{2}\left( \frac{5\beta_{1}}{32(\beta_{1} + \beta_{2})} - c_{1} \right),\;
	c_{0} - \frac{1}{4}\left(\frac{5\beta_{1}}{32(\beta_{1} + \beta_{2})} - c_{1} \right) + \frac{\beta_{1}}{8(\beta_{1} + \beta_{2})} \right\}.
\end{align*}
We see that when being in state $x = -1$ at time $t = 0$, it is optimal to choose $\gamma = 0$, while when being in state $x = 1$, it is certainly optimal to choose $\gamma = 1$ if
\begin{equation} \label{EqExmplC0}
	0 < c_{0} < \frac{\beta_{1}}{8(\beta_{1} + \beta_{2})}.
\end{equation}
Assume from now on that \eqref{EqExmplC0} holds. Then $\phi_{+}$ gives the optimal control actions also at time $t = 0$. Thus, under \eqref{EqExmplC1} and \eqref{EqExmplC0}, $\phi_{+}$ is optimal on $\{ \Phi = \phi_{+} \}$.

We proceed similarly to verify that $\hat{\phi}_{+}$ is optimal on the event $\{ \Phi = \hat{\phi}_{+} \}$. For $x\in \{-1,1\}$, set
\begin{align*}
\hat{V}_{+}(2,x) &\doteq F(x,m_{0}) = 0, \\
\hat{V}_{+}(1,x) &\doteq \min_{\gamma\in \{0,1\}} \left\{ c_{1}\cdot \gamma - x\cdot \mathbf{M}(m_{1}^{+}) + \Mean\left[ \hat{V}_{+}\bigl(2,\Psi(x,\gamma,\xi(2))\bigr)  \right] \right\}, \\
\hat{V}_{+}(0,x) &\doteq \min_{\gamma\in \{0,1\}} \left\{ c_{0}\cdot \gamma + \Mean\left[ \hat{V}_{+}\bigl(1,\Psi(x,\gamma,\xi(1))\bigr)  \right] \right\}.
\end{align*}
We have again omitted the weight factor, here $\beta_{3}$. Costs are thus computed with respect to the conditional probability $\Prb(\, .\,|\, \Phi = \hat{\phi}_{+})$. Notice that $\hat{V}_{+}$ is the value function of the optimal control problem the representative player faces when being told to play $\hat{\phi}_{+}$ by the mediator. We have $\hat{V}_{+}(2,.) \equiv 0$,
\begin{align*}
	\hat{V}_{+}(1,1) &= \min \left\{-\frac{\beta_{1}}{4(\beta_{1} + \beta_{2})},\;
c_{1} -  \frac{\beta_{1}}{4(\beta_{1} + \beta_{2})} \right\}, \\
	\hat{V}_{+}(1,-1) &= \min \left\{\frac{\beta_{1}}{4(\beta_{1} + \beta_{2})},\;
c_{1} + \frac{\beta_{1}}{4(\beta_{1} + \beta_{2})} \right\}.
\end{align*}
The first expression inside the min above corresponds again to control action $\gamma = 0$, the second to $\gamma = 1$. We see that at time $t = 1$ it is always optimal to choose $\gamma = 0$. This is exactly what $\hat{\phi}_{+}$ prescribes at $t = 1$. As to time $t = 0$, we have
\begin{align*}
	\hat{V}_{+}(0,1) &= \min \left\{ \frac{1}{2} \hat{V}_{+}(1,1) + \frac{1}{2} \hat{V}_{+}(1,-1),\;
	c_{0} + \frac{3}{4} \hat{V}_{+}(1,1) + \frac{1}{4} \hat{V}_{+}(1,-1) \right\} \\
	&= \min \left\{ 0,\;
	c_{0} - \frac{\beta_{1}}{8(\beta_{1} + \beta_{2})} \right\}, \\
	\hat{V}_{+}(0,-1) &= \min \left\{\frac{1}{2} \hat{V}_{+}(1,1) + \frac{1}{2} \hat{V}_{+}(1,-1),\;
	c_{0} + \frac{1}{4} \hat{V}_{+}(1,1) + \frac{3}{4} \hat{V}_{+}(1,-1) \bigr) \right\} \\
	&= \min \left\{ 0,\;
	c_{0} + \frac{\beta_{1}}{8(\beta_{1} + \beta_{2})} \right\}.
\end{align*}
We see that when being in state $x = -1$ at time $t = 0$, it is optimal to choose $\gamma = 0$, while when being in state $x = 1$, it is optimal to choose $\gamma = 1$ if \eqref{EqExmplC0} holds. This is what $\hat{\phi}_{+}$ prescribes at time $t = 0$. Therefore, under \eqref{EqExmplC0}, $\hat{\phi}_{+}$ is optimal on $\{ \Phi = \hat{\phi}_{+} \}$.

We have thus established that the correlated flow $\rho$ defined above is a correlated solution of the mean field game with initial distribution $m_{0}$ provided the coefficients satisfy \eqref{EqExmplBeta}, \eqref{EqExmplC1}, and \eqref{EqExmplC0}. In particular, given any $\alpha \in (0,1)$, we have that $\rho$ is a solution if
\begin{align*}
	& \beta_{1} = \beta_{2} = \frac{\alpha}{4}, & & \beta_{3} = \beta_{4} = \frac{1-\alpha}{4}, & & 0 < c_{0} < \frac{1}{16}, & & 0 < c_{1} < \frac{5}{64}. &
\end{align*}

\begin{rem} \label{RemCorrelatedExmpl}
	The correlated flow constructed in the example above does not simply arise by randomizing among solutions with deterministic flow of measures. While the representative player can infer with probability one what the flow of measures will be when receiving the recommendation to play strategy $\phi_{+}$, $\hat{\phi}_{+}$, $\phi_{-}$, or $\hat{\phi}_{-}$, she cannot do so when being told to play strategy $\phi_{o}$. In this case, the final evolution of the flow of measures will be uncertain for the player not only at time zero, but also at time $t = 1$. Also notice that in our example there are multiple, actually infinitely many solutions.
\end{rem}

%-----

\section{Convergence of correlated equilibria} \label{SectConv}

For $N\in \mathbb{N}$, let $\mathfrak{m}^N \in \prbms{\mathcal{X}^{N}}$, let $\gamma^{N} \in \prbms{\mathcal{R}^{N}}$ be a strategy profile, and let $(\epsilon_{N})_{N\in \mathbb{N}} \subset [0,\infty)$. Moreover, let $\mathfrak{m}_{0}\in \prbms{\mathcal{X}}$. We make the following assumptions:
\begin{hypenv}

	\item \label{hypContSystem} Continuity property of the system function $\Psi$: There exists a measurable function $\boldsymbol{w}\!: [0,\infty) \rightarrow [0,1]$ with $\boldsymbol{w}(s) \to 0$ as $s \to 0+$ such that for every $(t,x,a) \in \{0,\ldots,T-1\}\times \mathcal{X} \times \Gamma$, all $m,\tilde{m} \in \prbms{\mathcal{X}}$,
	\[
		\int_{\mathcal{Z}} \mathbf{1}_{\Psi(t,x,m,a,z)\neq \Psi(t,x,\tilde{m},a,z)} \nu(dz) \leq \boldsymbol{w}\bigl( \mathsf{dist}(m,\tilde{m}) \bigr).
	\]
	Moreover, for every $t\in \{0,\ldots,T-1\}$, every $\tau \in \prbms{\mathcal{X} \times \prbms{\mathcal{X}}\times \Gamma}$, $\Psi(t,.)$ is $\tau\otimes \nu$-almost everywhere continuous.

	\item \label{hypContCosts} The cost coefficients $f$, $F$ are continuous.
	
	\item \label{hypCE} For every $N\in \mathbb{N}$, $\gamma^{N}$ is a symmetric $\epsilon_{N}$-correlated equilibrium in restricted strategies with initial distribution $\mathfrak{m}^N$.
	
	\item \label{hypEps} The sequence $(\epsilon_{N})_{N\in \mathbb{N}}$ converges to zero as $N\to \infty$.
	
	\item \label{hypInit} Initial distributions: $\mathfrak{m}^N = \otimes^{N} \mathfrak{m}_{0,N}$ where $\mathfrak{m}_{0,N} \to \mathfrak{m}_{0}$ as $N\to \infty$.
\end{hypenv}

\begin{rem} \label{RemContSystem}
The continuity property \hypref{hypContSystem} is satisfied, for instance, if $\Psi$ is defined as follows. Choose $L > 0$, let $d\doteq |\mathcal{X}|$ be the number of states, and let  $\sigma\!: \{1,\ldots,d \} \rightarrow \mathcal{X}$ be a bijection. For $(t,x,a) \in \{0,\ldots,T-1\}\times \mathcal{X} \times \Gamma$, choose functions $a_{1,t,x,a},\ldots,a_{d,t,x,a} \!: \prbms{\mathcal{X}} \rightarrow [0,1]$ that are $L$-Lipschitz continuous and such that $\sum_{i=1}^{d} a_{i,t,x,a} = 1$. Now set
\[
\begin{split}
	\Psi(t,x,m,a,z) &\doteq \sigma\left( \argmin\left\{ j\in \{1,\ldots,d\} : \sum_{i=1}^{j} a_{i,t,x,a}(m) \geq z \right\} \right), \\
	& (t,x,m,a,z)\in \{0,\ldots,T-1\}\times \mathcal{X} \times \prbms{\mathcal{X}} \times \Gamma\times \mathcal{Z}.
\end{split}
\]
Recall that $\nu$ indicates the uniform distribution on $\mathcal{Z} = [0,1]$. With the above definition of $\Psi$, we have for all $m,\tilde{m} \in \prbms{\mathcal{X}}$,
\begin{align*}
	\int_{\mathcal{Z}} \mathbf{1}_{\Psi(t,x,m,a,z)\neq \Psi(t,x,\tilde{m},a,z)} \nu(dz) &\leq \sum_{j=1}^{d-1} \left| \sum_{i=1}^{j} a_{i,t,x,a}(m) - \sum_{i=1}^{j} a_{i,t,x,a}(\tilde{m}) \right| \\	
	&\leq L\frac{d(d-1)}{2} \mathsf{dist}(m,\tilde{m}).
\end{align*}
The first part of \hypref{hypContSystem} is thus satisfied with $\boldsymbol{w}(s) = L\frac{d(d-1)}{2} s$. This modulus of continuity changes if the functions $a_{1,t,x,a},\ldots,a_{d,t,x,a}$ are (uniformly) continuous, but not Lipschitz. In order to check the second part of \hypref{hypContSystem}, fix $t\in \{0,\ldots,T-1\}$ and let $D_{t}$ denote the set of points of discontinuity of $\Psi(t,.)$. By construction, the functions $a_{1,t,x,a},\ldots,a_{d,t,x,a}$ are continuous on $\prbms{\mathcal{X}}$, and they depend continuously also on $x,a$ since $\mathcal{X}$, $\Gamma$ are finite sets. In view of the definition of $\Psi$, it follows that
\[
	D_{t} \subseteq	\bigcup_{j=1}^{d} \left\{ (x,m,a,z)\in \mathcal{X}\times \prbms{\mathcal{X}}\times \Gamma\times \mathcal{Z} : \sum_{i=1}^{j} a_{i,t,x,a}(m) = z \right\}.
\]
The assertion of the second part of \hypref{hypContSystem} is now a consequence of Fubini's theorem as $\nu$ assigns measure zero to any finite subset of $\mathcal{Z}$.
\end{rem}

\begin{rem}
The example from Section~\ref{SectMFG} satisfies Assumptions \hypref{hypContSystem} and \hypref{hypContCosts} above. The system function $\Psi$ there can be written as in Remark~\ref{RemContSystem}. In particular, $\Psi(t,.)$ is almost surely continuous with respect to $\tau\otimes \nu$ given any $\tau \in \prbms{\mathcal{X} \times \prbms{\mathcal{X}}\times \Gamma}$, but discontinuous as a function $\mathcal{X} \times \prbms{\mathcal{X}}\times \Gamma \times \mathcal{Z} \rightarrow \mathcal{X}$.
\end{rem}

For $N\in \mathbb{N}\setminus \{1 \}$, let
\[
	\left((\Omega_{N},\mathcal{F}_{N},\Prb_{N}),\Phi^{N}_1,\ldots,\Phi^{N}_N,X^{N}_{1}(.),\ldots,X^{N}_{N}(.),\xi^{N}_{1}(.),\ldots,\xi^{N}_{N}(.) \right)
\]
be a realization of the triple $(\mathfrak{m}^{N},\gamma^{N},\Id)$, and set
\[
	\rho^{N} \doteq \Prb_{N}\circ \left( \Phi^{N}_{1}, \mu^{N}_{1}(0),\ldots,\mu^{N}_{1}(T) \right)^{-1},
\]
where $\mu^{N}_{1}(t) = \frac{1}{N-1} \sum_{j=2}^{N} \delta_{X^{N}_{j}(t)}$, $t\in \{0,\ldots,T\}$, as above. We then have the following convergence result:

\begin{thrm} \label{ThConvergence}
	Grant \hyprefall. Then $(\rho^{N})_{N\in \mathbb{N}}$ is relatively compact as a subset of $\prbms{\mathcal{R} \times \prbms{\mathcal{X}}^{T+1}}$, and any limit point is a correlated solution of the mean field game in restricted strategies with initial distribution $\mathfrak{m}_{0}$.
\end{thrm}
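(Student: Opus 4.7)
The plan is to use compactness, Skorokhod representation, and an exchangeability argument based on estimate \eqref{EqEMdist}. Relative compactness is immediate because $\mathcal{X}$ and $\mathcal{R}$ are finite: $\prbms{\mathcal{X}}^{T+1}$ and $\mathcal{R} \times \prbms{\mathcal{X}}^{T+1}$ are compact Polish, so $\prbms{\mathcal{R} \times \prbms{\mathcal{X}}^{T+1}}$ is compact in the weak topology. What needs work is to show that every limit point $\rho$ of $(\rho^{N})$ is a correlated MFG solution, and the main obstacle is the consistency condition.

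Fix a subsequence along which $\rho^{N} \to \rho$; passing to a further subsequence (still indexed by $N$), the joint laws of $(\Phi^{N}_{1}, \mu^{N}_{1}, X^{N}_{1}(\cdot), \xi^{N}_{1}(\cdot))$ on the compact Polish space $\mathcal{R} \times \prbms{\mathcal{X}}^{T+1} \times \mathcal{X}^{T+1} \times \mathcal{Z}^{T}$ also converge. Skorokhod representation provides a common probability space $(\bar\Omega, \bar{\mathcal{F}}, \bar\Prb)$ carrying a.s.-converging copies $(\Phi^{N}, \mu^{N}, X^{N}, \xi^{N}) \to (\Phi, \mu, X, \xi)$. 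Marginals give $\bar\Prb \circ (\Phi, \mu)^{-1} = \rho$ and $\bar\Prb \circ X(0)^{-1} = \mathfrak{m}_{0}$ (the latter via \hypref{hypInit}), and $\xi$ is i.i.d.\ $\nu$. The MFG-type independence in item (iv) of the realization definition does not hold directly in the prelimit (where $\mu^{N}_{1}$ depends weakly on $\xi^{N}_{1}$ through the mean-field coupling, as emphasized in the paper's remarks), but a propagation-of-chaos estimate exploiting the first part of \hypref{hypContSystem} together with the $1/N$ scaling of the empirical measure shows that the dependence vanishes in the limit, so that $\xi$ and $X(0)$ are independent of $(\Phi, \mu)$. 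Finally, combining a.s.\ convergence of the arguments with the almost-sure continuity part of \hypref{hypContSystem}, applied to $\tau \otimes \nu$ with $\tau$ the joint law of $(X(t), \mu(t), \Phi(t, X(t)))$, I pass to the limit in the prelimit dynamics to obtain \eqref{EqLimitDyn}. Thus $(\bar\Omega, \bar{\mathcal{F}}, \bar\Prb, \Phi, X, \mu, \xi)$ is a realization of $(\mathfrak{m}_{0}, \rho, \Id)$.

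For the optimality condition, fix $u \in \mathcal{U}$ and enlarge the prelimit tuple by the modified state process $Y^{N}_{1}(\cdot)$ driven by $u \circ \Phi^{N}_{1}$, noting that $\mu^{N}_{1}$ is unchanged because players $j \neq 1$ still follow the unmodified recommendation. A further Skorokhod extraction produces a realization of $(\mathfrak{m}_{0}, \rho, u)$ with state process $Y$, and \hypref{hypContCosts} (boundedness and continuity of $f, F$) gives convergence of the running and terminal cost expectations, so
\[
	J^{N}_{1}(\mathfrak{m}^{N}, \gamma^{N}, \Id) \to J(\mathfrak{m}_{0}, \rho, \Id), \qquad J^{N}_{1}(\mathfrak{m}^{N}, \gamma^{N}, u) \to J(\mathfrak{m}_{0}, \rho, u)
\]
along the subsequence. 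Passing to the limit in the $\epsilon_{N}$-CE inequality from \hypref{hypCE} and using \hypref{hypEps} yields the optimality condition for every $u \in \mathcal{U}$.

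For consistency, I want to show, for every bounded continuous $g \colon \prbms{\mathcal{X}}^{T+1} \to \mathbb{R}$ and every $\phi \colon \mathcal{X} \to \mathbb{R}$, that
\[
	\bar\Mean\bigl[\phi(X(t)) \, g(\mu)\bigr] = \bar\Mean\Bigl[\Bigl(\int \phi \, d\mu(t)\Bigr) g(\mu)\Bigr],
\]
which characterizes $\mu(t)$ as a regular conditional distribution of $X(t)$ given $\mathcal{F}^{\mu}$. Symmetry of $\mathfrak{m}^{N}$ from \hypref{hypInit} together with that of $\gamma^{N}$ from \hypref{hypCE} makes the prelimit tuple $(X^{N}_{j}, \Phi^{N}_{j}, \xi^{N}_{j})_{j=1}^{N}$ exchangeable. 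Writing $\int \phi \, d\mu^{N}_{1}(t) = \frac{1}{N-1} \sum_{j=2}^{N} \phi(X^{N}_{j}(t))$, the bound $\mathsf{dist}(\mu^{N}_{j}, \mu^{N}_{1}) \leq 1/(N-1)$ obtained from \eqref{EqEMdist}, combined with uniform continuity of $g$ on the compact space $\prbms{\mathcal{X}}^{T+1}$ and swapping players $1 \leftrightarrow j$ using exchangeability, gives, uniformly in $j \geq 2$,
\[
	\Mean\bigl[\phi(X^{N}_{j}(t)) \, g(\mu^{N}_{1})\bigr] = \Mean\bigl[\phi(X^{N}_{1}(t)) \, g(\mu^{N}_{1})\bigr] + o(1).
\]
Averaging over $j$ and applying bounded convergence on the Skorokhod space yields the required identity. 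The main difficulty throughout lies in securing the MFG-type independence in the limit and ensuring uniformity of the $o(1)$ terms across $j$; both are handled by the propagation-of-chaos flavor of the argument combined with the compactness of the ambient spaces.
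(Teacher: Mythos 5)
There is a genuine gap in your optimality step, and it sits exactly where the paper's proof concentrates its technical effort. You write that after replacing player~1's process by the modified process $Y^{N}_{1}$ driven by $u\circ\Phi^{N}_{1}$, ``$\mu^{N}_{1}$ is unchanged because players $j\neq 1$ still follow the unmodified recommendation.'' This is false for finite $N$: in the definition of a realization of $(\mathfrak{m}^{N},\gamma^{N},u)$ for player~1, each player $j\neq 1$ evolves according to $\Psi(t,X^{N}_{j}(t),\mu^{N}_{j}(t),\Phi^{N}_{j}(t,X^{N}_{j}(t)),\xi^{N}_{j}(t+1))$, and $\mu^{N}_{j}(t)$ contains the atom $\delta_{X^{N}_{1}(t)}$ of the \emph{deviated} player~1. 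So player~1's deviation perturbs every other player's trajectory and hence perturbs $\mu^{N}_{1}$ itself. You therefore face a dichotomy: either you use the true modified system, in which case the limit of $(\Phi^{N}_{1},\tilde{\mu}^{N}_{1})$ is some correlated flow $\tilde{\rho}$ that you have not shown to equal $\rho$, and the limiting inequality only reads $J(\mathfrak{m}_{0};\rho,\Id)\leq J(\mathfrak{m}_{0};\tilde{\rho},u)$, which is not the optimality condition; or you freeze $\mu^{N}_{1}$ at its unmodified value, in which case the prelimit cost of $Y^{N}_{1}$ is not $J^{N}_{1}(\mathfrak{m}^{N},\gamma^{N},u)$ and the $\epsilon_{N}$-correlated-equilibrium inequality from (A3) cannot be invoked. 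Closing either horn requires showing that the deviation's effect on the empirical measure vanishes as $N\to\infty$; this is the content of Steps Two and Three of the paper's proof, which introduce the auxiliary system $\hat{X}^{N}_{j}$, $j\geq 2$, fully decoupled from player~1, and run an induction over $t$ using the modulus $\boldsymbol{w}$ from (A1) and the bound \eqref{EqEMdist} to prove $\frac{1}{N-1}\sum_{j\geq 2}\Prb_{N}(\hat{X}^{N}_{j}(t)\neq\tilde{X}^{N}_{j}(t))\to 0$, whence $\tilde{\rho}=\rho$. You invoke ``a propagation-of-chaos estimate'' only for the independence property (iv), where the same coupling is indeed what the paper uses, but you do not deploy it where it is indispensable, namely to identify the deviated limit flow with $\rho$.

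The remainder of your outline is sound and close to the paper. Relative compactness is identical. Your consistency argument — swapping players $1\leftrightarrow j$ by exchangeability, bounding $\mathsf{dist}(\mu^{N}_{j}(s),\mu^{N}_{1}(s))$ by $O(1/N)$ via \eqref{EqEMdist}, averaging over $j$, and passing to the limit against test functions $\phi\otimes g$ — is a hands-on version of the paper's route through Lemmas~\ref{LemmaCondDistExchange}, \ref{LemmaCondDistTower}, and \ref{LemmaCondDistConv}, and it does correctly condition on the whole flow $\mu$ as Definition~\ref{DefCE} requires. The Skorokhod representation is a legitimate substitute for the paper's purely distributional use of the mapping theorem and Lemma~\ref{LemmaEquationConv}, provided you still justify passing to the limit in the dynamics via the $\tau\otimes\nu$-almost-everywhere continuity in (A1), which you do. But as it stands the proof of the optimality condition is incomplete.
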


\begin{proof}
If $\mathcal{S}$ is a compact Polish space, then $\prbms{\mathcal{S}}$ is compact with respect to the topology of weak convergence of measures. Since $\mathcal{X}$, $\mathcal{R}$ are finite sets, hence compact Polish spaces under the discrete topology, we have that $\prbms{\mathcal{R} \times \prbms{\mathcal{X}}^{T+1}}$ is compact. This in turn implies the relative compactness of $(\rho^{N})_{N\in \mathbb{N}}$ in $\prbms{\mathcal{R} \times \prbms{\mathcal{X}}^{T+1}}$.

In order to identify the limit points of $(\rho^{N})_{N\in \mathbb{N}}$, set
\[
	\eta^{N} \doteq \Prb_{N}\circ \left( \Phi^{N}_{1}, X^{N}_{1}(0),\ldots,X^{N}_{1}(T), \xi^{N}_{1}(1),\ldots,\xi^{N}_{1}(T), \mu^{N}_{1}(0),\ldots,\mu^{N}_{1}(T) \right)^{-1}.
\]
Clearly, $\rho^{N}$ coincides with the image (push forward) measure of $\eta^{N}$ under the natural projection $\mathcal{R} \times \mathcal{X}^{T+1} \times \mathcal{Z}^{T} \times \prbms{\mathcal{X}}^{T+1} \rightarrow \mathcal{R} \times \prbms{\mathcal{X}}^{T+1}$. Moreover, $(\eta^{N})_{N\in \mathbb{N}}$ is relatively compact in $\prbms{\mathcal{R} \times \mathcal{X}^{T+1} \times \mathcal{Z}^{T} \times \prbms{\mathcal{X}}^{T+1}}$ since the space $\mathcal{R} \times \mathcal{X}^{T+1} \times \mathcal{Z}^{T} \times \prbms{\mathcal{X}}^{T+1}$ is compact, too. To identify the limit points of $(\rho^{N})_{N\in \mathbb{N}}$ it is therefore enough to characterize the limits of convergent subsequences of $(\eta^{N})_{N\in \mathbb{N}}$. We will proceed in several steps.

\paragraph{Step One.} Let $(\eta^{N_k})_{k\in \mathbb{N}}$ be any convergent subsequence of $(\eta^{N})_{N\in \mathbb{N}}$, and denote its limit by $\eta$. Let $(\Phi,X(.),\xi(.),\mu(.))$ be a $\mathcal{R} \times \mathcal{X}^{T+1} \times \mathcal{Z}^{T} \times \prbms{\mathcal{X}}^{T+1}$-valued random element on some probability space $(\Omega,\mathcal{F},\Prb)$ such that
\[
	\eta = \Prb\circ \left( \Phi, X(0),\ldots,X(T), \xi(1),\ldots,\xi(T), \mu(0),\ldots,\mu(T) \right)^{-1}.
\]
Set
\[
	\rho \doteq \Prb\circ \left( \Phi, \mu(0),\ldots,\mu(T) \right)^{-1}.
\]
Then the following properties hold:
\begin{enumerate}[(a)]
	\item\label{ProofConvInitial} $\Prb\circ (X(0))^{-1} = \mathfrak{m}_{0}$;
	
	\item\label{ProofConvNoise} $\xi(t)$, $t\in \{1,\ldots,T\}$, are i.i.d.\ with common distribution $\nu$;
	
	\item\label{ProofConvInd} $\xi(.)$, $X(0)$, and $(\Phi,\mu(.))$ are independent;
	
	\item\label{ProofConvLimitDyn} $\Prb$-almost surely, for every $t\in \{0,\ldots,T-1\}$,
	\[
		X(t+1) = \Psi\left( t, X(t), \mu(t), \Phi(t,X(t)), \xi(t+1) \right);
	\]
	
	\item\label{ProofConvCosts} $\lim_{k\to \infty} J^{N_k}_{1}\left( \mathfrak{m}^{N_k};\gamma^{N_k},\Id \right) = J(\mathfrak{m}_{0};\rho,\Id)$.
\end{enumerate}

Point~\eqref{ProofConvInitial} is a consequence of assumption \hypref{hypInit}. % which entail that the sequence of joint initial distributions is $\mathfrak{m}_{0}$-chaotic; see, for instance, Proposition~2.2 in \citet{sznitman89}.
Point~\eqref{ProofConvNoise} follows from the corresponding independence property of $\xi^{N}_{1}(t)$, $t\in \{1,\ldots,T\}$, and their joint convergence in distribution.% Recall that a probability measure on the Borel sets of the product of a finite number of Polish spaces is determined by the integrals over products of bounded continuous functions (for instance, Proposition~3.4.6 in \citet{ethierkurtz86}).

Points \eqref{ProofConvInd} and \eqref{ProofConvLimitDyn} will follow from analogous properties established in the next two steps by taking $u=\Id$. The convergence of costs according to \eqref{ProofConvCosts} is again a consequence of convergence in distribution in conjunction with assumption~\hypref{hypContCosts}.

\paragraph{Step Two.} Let $u\!: \mathcal{R} \rightarrow \mathcal{R}$ be any strategy modification. We define a realization of $(\mathfrak m_0, \rho, u)$ with the same noises and in the same probability space as the realization of $(\mathfrak m_0, \rho, \Id)$ given in Step One. For $N\in \mathbb{N}\setminus \{ 1\}$, define $\mathcal{X}$-valued random variables $\tilde{X}^{N}_{j}(t)$, $j\in \{1,\ldots,N\}$, $t\in \{0,\ldots,T\}$, on $(\Omega_{N},\mathcal{F}_{N},\Prb_{N})$ recursively through
\begin{align*}
	\tilde{X}^{N}_{i}(0) &\doteq X^{N}_{i}(0) \text{ for every } i \in \{1,\ldots,N\}, \\
	\tilde{X}^{N}_{1}(t+1) &\doteq \Psi\left(t,\tilde{X}^{N}_{1}(t),\tilde{\mu}^{N}_{1}(t),u\circ\Phi^{N}_{1}\left(t,\tilde{X}^{N}_{1}(t)\right), \xi^{N}_{1}(t+1) \right),\\
		\tilde{X}^{N}_{j}(t+1) &= \Psi\left(t,\tilde{X}^{N}_{j}(t),\tilde{\mu}^{N}_{j}(t),\Phi^{N}_{j}\left(t,\tilde{X}^{N}_{j}(t)\right), \xi^{N}_{j}(t+1) \right),\quad j\neq 1, \\
		&t\in \{0,\ldots,T-1\},
\end{align*}
where
\[
	\tilde{\mu}^{N}_{i}(t) \doteq \frac{1}{N-1} \sum_{l\neq i} \delta_{\tilde{X}^{N}_{l}(t)},\quad i\in \{1,\ldots,N\}.
\]
Set
\[
	\tilde{\eta}^{N} \doteq \Prb_{N}\circ \left( \Phi^{N}_{1}, \tilde{X}^{N}_{1}(0),\ldots,\tilde{X}^{N}_{1}(T), \xi^{N}_{1}(1),\ldots,\xi^{N}_{1}(T), \tilde{\mu}^{N}_{1}(0),\ldots,\tilde{\mu}^{N}_{1}(T) \right)^{-1}.
\]
Reasoning as in Step~One, we have that $(\tilde{\eta}^{N})_{N\in \mathbb{N}}$ is relatively compact in $\prbms{\mathcal{R} \times \mathcal{X}^{T+1} \times \mathcal{Z}^{T} \times \prbms{\mathcal{X}}^{T+1}}$, and so is $(\tilde{\eta}^{N_k})_{k\in \mathbb{N}}$. Choose any convergent subsequence of $(\tilde{\eta}^{N_k})_{k\in \mathbb{N}}$, which we continue to indicate by $(\tilde{\eta}^{N_k})_{k\in \mathbb{N}}$, thus omitting the sub-subscript. Denote its limit by $\tilde{\eta}$, and let $(\tilde{\Phi},\tilde{X}(.),\tilde{\xi}(.),\tilde{\mu}(.))$ be a $\mathcal{R} \times \mathcal{X}^{T+1} \times \mathcal{Z}^{T} \times \prbms{\mathcal{X}}^{T+1}$-valued random element on some probability space $(\tilde{\Omega},\tilde{\mathcal{F}},\tilde{\Prb})$ such that
\[
	\tilde{\eta} = \tilde{\Prb}\circ \left( \tilde{\Phi}, \tilde{X}(0),\ldots, \tilde{X}(T), \tilde{\xi}(1),\ldots,\tilde{\xi}(T), \tilde{\mu}(0),\ldots, \tilde{\mu}(T) \right)^{-1}.
\]
Set
\[
	\tilde{\rho} \doteq \tilde{\Prb}\circ \left( \tilde{\Phi}, \tilde{\mu}(0),\ldots,\tilde{\mu}(T) \right)^{-1}.
\]
Then the following properties hold:
\begin{enumerate}[(a)]
	\item $\Prb\circ (\tilde{X}(0))^{-1} = \mathfrak{m}_{0}$;
		
	\item $\tilde{\xi}(t)$, $t\in \{1,\ldots,T\}$, are i.i.d.\ with common distribution $\nu$;
	
	\item $\tilde{\xi}(.)$, $\tilde{X}(0)$, and $(\tilde{\Phi},\tilde{\mu}(.))$ are independent;

	\item $\tilde{\Prb}$-almost surely, for every $t\in \{0,\ldots,T-1\}$,
	\[
		\tilde{X}(t+1) = \Psi\left( t, \tilde{X}(t), \tilde{\mu}(t), u\circ \tilde{\Phi}(t,\tilde{X}(t)), \tilde{\xi}(t+1) \right);
	\]
	
	\item $\lim_{k\to \infty} J^{N_k}_{1}\left( \mathfrak{m}^{N_k};\gamma^{N_k},u \right) = J(\mathfrak{m}_{0};\tilde{\rho},u)$.
\end{enumerate}

Points \eqref{ProofConvInitial}, \eqref{ProofConvNoise}, and \eqref{ProofConvCosts} follow as in Step~One. The independence property~\eqref{ProofConvInd} will be established in Step~Three. To verify Property~\eqref{ProofConvLimitDyn}, define functions $G_{t}\!: \mathcal{R}\times \mathcal{X}\times \prbms{\mathcal{X}} \times \mathcal{Z} \rightarrow \mathcal{X}$, $t\in \{1,\ldots,T\}$, by setting
\[
	G_{t}(\phi,x,m,z) \doteq \Psi\bigl(t,x,m, u(\phi)(t,x), z\bigr).
\]
The function $G_{t}$ is $\sigma\otimes \nu$-almost everywhere continuous given any measure $\sigma\in \prbms{\mathcal{R}\times \mathcal{X} \times \prbms{\mathcal{X}}}$. This follows from the second part of assumption~\hypref{hypContSystem} and the fact that the spaces $\mathcal{R}$ and $\mathcal{X}$ are finite. In particular, the mapping $\mathcal{R}\times \mathcal{X} \rightarrow \Gamma$ given by $(\phi,x) \mapsto \phi(t,x)$ is continuous for every $t\in \{0,\ldots,T-1\}$. Property~\eqref{ProofConvLimitDyn} is now a consequence of Lemma~\ref{LemmaEquationConv}, the almost everywhere continuity of $G_{t}$ in conjunction with the independence properties \eqref{ProofConvNoise} and \eqref{ProofConvInd}, and the convergence in distribution of
\[
	\left(\tilde{X}^{N_{k}}_{1}(t+1),\left(\Phi^{N_{k}}_{1}, \tilde{X}^{N_{k}}_{1}(t), \tilde{\mu}^{N_{k}}_{1}(t), \xi^{N_{k}}_{1}(t+1)\right)\right)
\]
to
\[
	\left( \tilde{X}(t+1), \left(\tilde{\Phi}, \tilde{X}(t), \tilde{\mu}(t), \tilde{\xi}(t+1) \right) \right)
\]
as $k\to \infty$, by the mapping theorem.

Point~\eqref{ProofConvCosts} above, together with the corresponding property established in Step~One, entails thanks to assumptions \hypref{hypCE} and \hypref{hypEps} that
\[
	J(\mathfrak{m}_{0};\rho,\Id) \leq J(\mathfrak{m}_{0};\tilde{\rho},u).
\]
It remains to show that $\rho = \tilde{\rho}$ and that property~\eqref{ProofConvInd} holds.

\paragraph{Step Three.} For $N\in \mathbb{N}\setminus \{ 1\}$, recursively define $\mathcal{X}$-valued random variables $\hat{X}^{N}_{j}(t)$, $j\in \{2,\ldots,N\}$, $t\in \{0,\ldots,T\}$, on $(\Omega_{N},\mathcal{F}_{N},\Prb_{N})$ through
\begin{align*}
\hat{X}^{N}_{j}(0) &\doteq X^{N}_{j}(0) \text{ for every } j \in \{2,\ldots,N\}, \\
\hat{X}^{N}_{j}(t+1) &= \Psi\left(t,\hat{X}^{N}_{j}(t),\hat{\mu}^{N}_{1}(t),\Phi^{N}_{j}\left(t,\hat{X}^{N}_{j}(t)\right), \xi^{N}_{j}(t+1) \right),\quad j\neq 1, \\
&t\in \{0,\ldots,T-1\},
\end{align*}
where
\begin{equation}\label{eq:emp-meas-2-N}
\hat{\mu}^{N}_{1}(t) \doteq \frac{1}{N-1} \sum_{l=2}^{N} \delta_{\hat{X}^{N}_{l}(t)}.
\end{equation}
The main difference between $\tilde X$ and $\hat X$ is that for the latter we consider only players in $\{2,\ldots,N\}$, thus excluding the contribution of Player~1; see the empirical measure \eqref{eq:emp-meas-2-N} above.

Let $\mathsf{dist}$ be the metric on $\prbms{\mathcal{X}}$ introduced in Section~\ref{SectNotation}. We claim that for every $t\in \{0,\ldots,T\}$,
\begin{equation} \label{ProofConvHatTilde}
	\frac{1}{N-1} \sum_{j=2}^{N}\Prb_{N}\left( \hat{X}^{N}_{j}(t) \neq \tilde{X}^{N}_{j}(t) \right)  \stackrel{N\to \infty}{\longrightarrow} 0.
\end{equation}
We verify \eqref{ProofConvHatTilde} by induction over $t\in \{0,\ldots,T\}$. First notice that, by inequality~\eqref{EqEMdist}, for all $j\in \{ 1,\ldots,N \}$, including $j=1$, all $t\in \{0,\ldots,T\}$,
\begin{equation} \label{ProofConvHatTilde2}
	\Mean_{N}\left[ \mathsf{dist}\bigl(\hat{\mu}^{N}_{1}(t), \tilde{\mu}^{N}_{j}(t)\bigr) \right] \leq \frac{1}{N-1} + \frac{1}{N-1} \sum_{l=2}^{N}\Prb_{N}\left( \hat{X}^{N}_{l}(t) \neq \tilde{X}^{N}_{l}(t) \right).
\end{equation}
It is clear that relation~\eqref{ProofConvHatTilde} holds if $t=0$, since $\hat{X}^{N}_{j}(0) = \tilde{X}^{N}_{j}(0) = X^{N}_{j}(0)$ for all $j\in \{2,\ldots,N\}$. Now, suppose that \eqref{ProofConvHatTilde} holds for some $t\in \{0,\ldots,T-1\}$. For each $j\in \{2,\ldots,N\}$, we have
\begin{multline*}
	\Prb_{N}\left( \hat{X}^{N}_{j}(t+1) \neq \tilde{X}^{N}_{j}(t+1) \right) \leq \Prb_{N}\left( \hat{X}^{N}_{j}(t) \neq \tilde{X}^{N}_{j}(t) \right) \\
	+ \Mean_{N}\left[ \mathbf{1}_{\Psi\left(t,\hat{X}^{N}_{j}(t),\hat{\mu}^{N}_{1}(t),\Phi^{N}_{j}\left(t,\hat{X}^{N}_{j}(t)\right), \xi^{N}_{j}(t+1) \right) \neq \Psi\left(t,\hat{X}^{N}_{j}(t),\tilde{\mu}^{N}_{j}(t),\Phi^{N}_{j}\left(t,\hat{X}^{N}_{j}(t)\right), \xi^{N}_{j}(t+1) \right) } \right].
\end{multline*}
Using the Fubini-Tonelli theorem, the independence of $\xi^{N}_{j}(t+1)$, as well as assumption \hypref{hypContSystem}, we find the expected value in the display above to be less than or equal to
\begin{multline*}
	\Mean_{N}\left[ \int_{\mathcal{Z}} \mathbf{1}_{\Psi\left(t,\hat{X}^{N}_{j}(t),\hat{\mu}^{N}_{1}(t),\Phi^{N}_{j}\left(t,\hat{X}^{N}_{j}(t) \right), z \right) \neq \Psi\left(t,\hat{X}^{N}_{j}(t),\tilde{\mu}^{N}_{j}(t),\Phi^{N}_{j}\left(t,\hat{X}^{N}_{j}(t) \right), z \right) } \nu(dz) \right] \\
	\leq \Mean_{N}\left[ \boldsymbol{w}\left( \mathsf{dist}\bigl(\hat{\mu}^{N}_{1}(t), \tilde{\mu}^{N}_{j}(t)\bigr) \right) \right].
\end{multline*}
The induction hypothesis and \eqref{ProofConvHatTilde2} imply that
\[
	\max_{j\in \{ 2,\ldots,N \}} \Mean_{N}\left[ \mathsf{dist}\bigl(\hat{\mu}^{N}_{1}(t), \tilde{\mu}^{N}_{j}(t)\bigr) \right] \stackrel{N\to \infty}{\longrightarrow} 0.
\]
This in turn entails, by Markov's inequality and the fact that $\boldsymbol{w}$ is bounded non-negative with $\boldsymbol{w}(s) \to 0$ as $s \to 0+$, that
\[
	\max_{j\in \{ 2,\ldots,N \}} \Mean_{N}\left[ \boldsymbol{w}\left( \mathsf{dist}\bigl(\hat{\mu}^{N}_{1}(t), \tilde{\mu}^{N}_{j}(t)\bigr) \right) \right] \stackrel{N\to\infty}{\longrightarrow} 0.
\]
Using again the induction hypothesis, we find that
\begin{multline*}
	\frac{1}{N-1} \sum_{j=2}^{N}\Prb_{N}\left( \hat{X}^{N}_{j}(t+1) \neq \tilde{X}^{N}_{j}(t+1) \right) \\
	\leq \frac{1}{N-1} \sum_{j=2}^{N}\Prb_{N}\left( \hat{X}^{N}_{j}(t) \neq \tilde{X}^{N}_{j}(t) \right) + \max_{j\in \{ 2,\ldots,N \}} \Mean_{N}\left[ \boldsymbol{w}\left( \mathsf{dist}\bigl(\hat{\mu}^{N}_{1}(t), \tilde{\mu}^{N}_{j}(t)\bigr) \right) \right] \stackrel{N\to \infty}{\longrightarrow} 0.
\end{multline*}
This establishes \eqref{ProofConvHatTilde} for all $t\in \{0,\ldots,T \}$.

As a consequence of \eqref{ProofConvHatTilde} and \eqref{ProofConvHatTilde2}, we obtain
\[
	\Mean_{N}\left[\sum_{t=0}^{T} \mathsf{dist}\bigl(\hat{\mu}^{N}_{1}(t), \tilde{\mu}^{N}_{1}(t)\bigr) \right] \stackrel{N\to \infty}{\longrightarrow} 0.
\]
By Lemma~\ref{LemmaPerturbConv}, this implies that the sequences  
\[ (\Phi^{N_{k}}_{1},\tilde{X}^{N_{k}}_{1}(.),\xi^{N_{k}}_{1}(.),\tilde{\mu}^{N_{k}}_{1}(.)),\quad\text{and}\quad (\Phi^{N_{k}}_{1},\tilde{X}^{N_{k}}_{1}(.),\xi^{N_{k}}_{1}(.),\hat{\mu}^{N_{k}}_{1}(.))\]
have the same limit in distribution, namely $(\tilde{\Phi},\tilde{X}(.),\tilde{\xi}(.),\tilde{\mu}(.))$. From equation \eqref{eq:emp-meas-2-N} it is clear that the definition of $\hat{\mu}^{N}_{1}(.)$ through the random variables $\hat{X}^{N}_{j}(t)$, with $j\geq 2$, does not depend on the strategy modification $u$. Thus, also $(\Phi^{N_{k}}_{1},X^{N_{k}}_{1}(.),\xi^{N_{k}}_{1}(.),\mu^{N_{k}}_{1}(.))$ and $(\Phi^{N_{k}}_{1},X^{N_{k}}_{1}(.),\xi^{N_{k}}_{1}(.),\hat{\mu}^{N_{k}}_{1}(.))$ have the same limit in distribution, namely $(\Phi,X(.),\xi(.),\mu(.))$. Since $X^{N}_{1}(0) = \tilde{X}^{N}_{1}(0)$ for every $N \in \mathbb{N}$, we find that
\[
	\Prb\circ \left(\Phi,X(0),\xi(.),\mu(.) \right)^{-1} = \tilde{\Prb}\circ \left(\tilde{\Phi},\tilde{X}(0),\tilde{\xi}(.),\tilde{\mu}(.) \right)^{-1}.
\]
This implies, in particular, that $\rho = \tilde{\rho}$. In addition, by the fact that $\hat{\mu}^{N}_{1}(.)$ does not depend on Player~1's state (see \eqref{eq:emp-meas-2-N}), the independence of $X^{N}_{1}(0),\ldots,X^{N}_{N}(0)$ according to \hypref{hypInit}, and the independence of $X^{N}_{j}(0)$, $\xi^{N}_{j}(.)$, $\Phi^{N}_{j}$, $j\in \{ 1,\ldots,N \}$, we have that
\[
	X^{N}_{1}(0),\; \xi^{N}_{1}(.), \text{ and } (\Phi^{N}_{1},\hat{\mu}^{N}_{1}) \text{ are independent, for every } N \in \mathbb{N}.
\]
Convergence in distribution now yields property~\eqref{ProofConvInd}. Thanks to Step~Two, it follows that
\[
	J(\mathfrak{m}_{0};\rho,\Id) \leq J(\mathfrak{m}_{0};\rho,u).
\]
The optimality condition of Definition~\ref{DefCE} is therefore satisfied.

\paragraph{Step Four.} We verify the consistency condition in Definition~\ref{DefCE}. For $N\in \mathbb{N}$, let $\mu^{N}(t)$ denote the empirical measure of the states at time $t\in \{0,\ldots,T\}$ of all players in the $N$-player game, and let $\boldsymbol{\mu}^{N}$ denote the empirical measure of their state trajectories:
\begin{align*}
	& \mu^{N}(t) \doteq \frac{1}{N} \sum_{i=1}^{N} \delta_{X^{N}_{i}(t)}, & & \boldsymbol{\mu}^{N} \doteq \frac{1}{N} \sum_{i=1}^{N} \delta_{(X^{N}_{i}(0),\ldots,X^{N}_{i}(T))}. &
\end{align*}
Thus, $\mu^{N}(t)$ is a $\prbms{\mathcal{X}}$-valued random variable, for every $t$, while $\boldsymbol{\mu}^{N}$ is a $\prbms{\mathcal{X}^{T+1}}$-valued random variable.

As a consequence of the symmetry of the correlated profiles according to \hypref{hypCE}, of the initial distributions according to \hypref{hypInit}, and of the dynamics, we obtain that $(X^{N}_{1}(t),\ldots,X^{N}_{N}(t))$ is a finite exchangeable sequence of $\mathcal{X}$-valued random variables for every $t$, while $(X^{N}_{1}(.),\ldots,X^{N}_{N}(.))$ is a finite exchangeable sequence of $\mathcal{X}^{T+1}$-valued random variables. Lemma~\ref{LemmaCondDistExchange} now yields the conditional distributions of the state and of the state trajectory of player~1 given the corresponding empirical measure:
\begin{align*}
	& \Prb_{N} \left( X^{N}_{1}(t)\in . \;|\; \mu^{N}(t) \right) = \mu^{N}(t)(.), & &t\in \{0,\ldots,T\}, & \\
	& \Prb_{N} \left( (X^{N}_{1}(0),\ldots,X^{N}_{1}(T)) \in . \;|\; \boldsymbol{\mu}^{N} \right) = \boldsymbol{\mu}^{N}(.). & &&
\end{align*}
Applying the conditional distribution of the state trajectory of player~1 to sets of the form $\mathcal{X}^{t} \times B \times \mathcal{X}^{T-t}$ shows that also
\[
	\Prb_{N} \left( X^{N}_{1}(t)\in . \;|\; \boldsymbol{\mu}^{N} \right) = \mu^{N}(t)(.) \text{ for every } t\in \{0,\ldots,T\}.
\]
The $\sigma$-algebra generated by $\mu^{N}(.) = (\mu^{N}(0),\ldots,\mu^{N}(T))$, the flow of empirical measures, is contained in $\sigma(\boldsymbol{\mu}^{N})$, while it contains $\sigma(\mu^{N}(t))$ for every $t$. In view of Lemma~\ref{LemmaCondDistTower}, we thus find that
\[
	\Prb_{N} \left( X^{N}_{1}(t)\in . \;|\; (\mu^{N}(0),\ldots,\mu^{N}(T)) \right) = \mu^{N}(t)(.) \text{ for every } t\in \{0,\ldots,T\}.
\]
According to Step~One (using the mapping theorem), we have that, for every $t\in \{0,\ldots,T\}$, the random vector $(X^{N_{k}}_{1}(t),(\mu^{N_{k}}_{1}(0),\ldots,\mu^{N_{k}}_{1}(T)),\mu^{N_{k}}_{1}(t))$ converges in distribution to $(X(t),(\mu(0),\ldots,\mu(T)),\mu(t))$ as $k\to \infty$. Now, for every $k\in \mathbb{N}$, every $\omega\in \Omega_{N_k}$, every $t\in \{0,\ldots,T\}$,
\[
	\mathsf{dist}\left( \mu^{N_{k}}_{1,\omega}(t), \mu^{N_{k}}_{\omega}(t) \right) \leq (N_{k}-1)\left( \frac{1}{N_{k}-1} - \frac{1}{N_{k}} \right) + \frac{1}{N_{k}} = \frac{2}{N_{k}},
\]
where $\mathsf{dist}$ is the metric on $\prbms{\mathcal{X}}$ introduced in Section~\ref{SectNotation}. This implies by Lemma~\ref{LemmaPerturbConv} that also the vector $(X^{N_{k}}_{1}(t),(\mu^{N_{k}}(0),\ldots,\mu^{N_{k}}(T)),\mu^{N_{k}}(t))$ converges in distribution to $(X(t),(\mu(0),\ldots,\mu(T)),\mu(t))$ as $k\to \infty$. By Lemma~\ref{LemmaCondDistConv}, we now find the conditional distribution of $X(t)$ given the flow of measures $(\mu(0),\ldots,\mu(T))$: 
\[
	\Prb\left( X(t)\in . \;|\; (\mu(0),\ldots,\mu(T)) \right) = \mu(t)(.) \text{ for every } t\in \{0,\ldots,T\},
\]
which yields the consistency condition.
\end{proof}

%---

\section{Approximate $N$-player correlated equilibria}\label{SectApprox}

The next result shows how to construct a sequence of approximate $N$-player correlated equilibria with approximation error tending to zero as $N\to \infty$ provided we have a correlated solution to the mean field game. The construction can be roughly described as follows: first, the mediator draws some flow of measures from the second marginal of the correlated solution and second, conditioning on such a flow, he draws a sequence of i.i.d.\ recommendations that are privately communicated to the players in the $N$-player games.

In order to rigorously state the result, let $\mathfrak{m}_{0} \in \prbms{\mathcal{X}}$, and let $(\mathfrak{m}^{N})_{N\in \mathbb{N}}$ be such that assumption \hypref{hypInit} holds: $\mathfrak{m}^N = \otimes^{N} \mathfrak{m}_{0,N}$ with $\mathfrak{m}_{0,N} \to \mathfrak{m}_{0}$ as $N\to \infty$.

\begin{thrm} \label{ThApproximation}
Grant \hypref{hypContSystem} and \hypref{hypContCosts}. Suppose that $\rho \in \prbms{\mathcal{R} \times \prbms{\mathcal{X}}^{T+1}}$ is a correlated solution of the mean field game with initial distribution $\mathfrak{m}_{0}$. For $N\in \mathbb{N}$, define $\gamma^{N} \in \prbms{\mathcal{R}^{N}}$ through
\[
	\gamma^{N}(C_{1}\times\ldots\times C_{N}) \doteq \int_{\prbms{\mathcal{X}}^{T+1}} \prod_{i=1}^{N} \rho_{1}(C_{i} \,|\, m)\; \rho_{2}(dm),
\]
where $\rho$ has been factorized according to
\[
	\rho(C\times B) = \int_{B} \rho_{1}(C \,|\, m) \rho_{2}(dm),\quad C\subseteq \mathcal{R},\; B \in \otimes^{T+1}\Borel{\prbms{\mathcal{X}}}.
\]
Then there exists a sequence $(\epsilon_{N})_{N\in \mathbb{N}} \subset [0,\infty)$ such that $\gamma^{N}$ is an $\epsilon_{N}$-correlated equilibrium with initial distribution $\mathfrak{m}^{N}$, for every $N$, and $\epsilon_{N} \to 0$ as $N\to \infty$.
\end{thrm}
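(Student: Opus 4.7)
Since the $\Phi^{N}_{i}$ are, by construction, conditionally i.i.d.\ given the flow, $\gamma^{N}$ is symmetric; combined with finiteness of $\mathcal{U}$, it suffices to prove that for each fixed $u \in \mathcal{U}$,
\[
J^{N}_{1}(\mathfrak{m}^{N},\gamma^{N},u) \to J(\mathfrak{m}_{0},\rho,u) \quad \text{as } N\to\infty.
\]
Granted this pointwise cost convergence, the optimality clause of Definition~\ref{DefCE} forces $\epsilon_{N} \doteq \max_{u\in \mathcal{U}} \bigl( J^{N}_{1}(\mathfrak{m}^{N},\gamma^{N},\Id) - J^{N}_{1}(\mathfrak{m}^{N},\gamma^{N},u) \bigr)_{+}$ to tend to zero, which is exactly the claim.

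To obtain this convergence, the plan is to realize the $N$-player system and a realization of $(\mathfrak{m}_{0},\rho,u)$ \emph{on the same probability space}. First draw $\mu \sim \rho_{2}$, then conditionally on $\mu$ draw $\Phi^{N}_{1},\ldots,\Phi^{N}_{N}$ i.i.d.\ from $\rho_{1}(\cdot\,|\,\mu)$, and independently draw i.i.d.\ initial states $X^{N}_{i}(0) \sim \mathfrak{m}_{0,N}$ and i.i.d.\ noises $\xi^{N}_{j}(t) \sim \nu$, the three groups being mutually independent. Then $(\Phi^{N}_{1},\ldots,\Phi^{N}_{N})$ has the prescribed law $\gamma^{N}$, while $(\Phi^{N}_{1}, \mu)$ has joint law $\rho$, so pairing the latter with $X^{N}_{1}(0)$, $\xi^{N}_{1}(\cdot)$ and running $u$-modified dynamics driven by $\mu(t)$ yields a realization of $(\mathfrak{m}_{0},\rho,u)$, up to a vanishing coupling error from \hypref{hypInit}.

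The core step is to establish that the empirical flow concentrates on $\mu$: $\Mean[\mathsf{dist}(\mu^{N}_{1}(t), \mu(t))] \to 0$ for every $t$. For this, introduce the auxiliary ``ideal'' processes
\[
\bar X^{N}_{j}(t+1) \doteq \Psi\bigl( t, \bar X^{N}_{j}(t), \mu(t), \Phi^{N}_{j}(t, \bar X^{N}_{j}(t)), \xi^{N}_{j}(t+1) \bigr),\quad \bar X^{N}_{j}(0) \doteq X^{N}_{j}(0),
\]
which share the unmodified recommendation, initial state and noise with $X^{N}_{j}$ but are driven by the true flow $\mu(t)$ in place of the empirical one. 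Conditionally on $\mu$, the $\bar X^{N}_{j}$ are i.i.d., and the consistency clause of Definition~\ref{DefCE}, applied to the corresponding realization of $(\mathfrak{m}_{0},\rho,\Id)$, identifies their common marginal at time $t$ as $\mu(t)$ (up to an error controlled by $\mathsf{dist}(\mathfrak{m}_{0,N},\mathfrak{m}_{0})$). A conditional law of large numbers then gives $\Mean[\mathsf{dist}(\bar\mu^{N}_{1}(t),\mu(t))]\to 0$ with $\bar\mu^{N}_{1}(t) \doteq \frac{1}{N-1}\sum_{j=2}^{N}\delta_{\bar X^{N}_{j}(t)}$. A coupling induction on $t$, completely parallel to Step~Three of the proof of Theorem~\ref{ThConvergence} and exploiting \hypref{hypContSystem} together with \eqref{EqEMdist}, then produces $\max_{j\ge 2}\Prb(X^{N}_{j}(t) \neq \bar X^{N}_{j}(t)) \to 0$, hence $\Mean[\mathsf{dist}(\mu^{N}_{1}(t),\bar\mu^{N}_{1}(t))]\to 0$; adding the two pieces yields the claim.

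With the flow convergence established, Lemma~\ref{LemmaPerturbConv}, together with the $\Psi$-a.e.\ continuity from \hypref{hypContSystem} and the mapping theorem, promotes it to convergence in distribution of the player-$1$ trajectory under modification $u$ to its MFG counterpart; \hypref{hypContCosts} then closes the cost convergence. The main obstacle lies in the third paragraph: the inductive coupling works only because the consistency condition of Definition~\ref{DefCE} can be invoked on an \emph{ensemble} of conditionally i.i.d.\ replicas of the representative player, which is available precisely because of the factorization $\rho(d\phi,dm)=\rho_{1}(d\phi\,|\,m)\rho_{2}(dm)$ that defines $\gamma^{N}$—had the mediator sampled the $\Phi^{N}_{i}$ merely with marginal $\rho_{1}(\cdot) \doteq \int \rho_{1}(\cdot\,|\,m)\rho_{2}(dm)$ rather than jointly through $\mu$, the consistency identification of the ideal marginals would fail.
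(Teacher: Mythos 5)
Your proposal is correct, but it follows a genuinely different route from the paper's. The paper first disintegrates the cost over the flow variable, writing $J^{N}_{1}(\mathfrak{m}^{N};\gamma^{N},u)=\int J^{N}_{1}(\mathfrak{m}^{N};\gamma^{N}_{m},u)\,\rho_{2}(dm)$ with $\gamma^{N}_{m}=\otimes^{N}\rho_{1}(\cdot\,|\,m)$, then for each fixed $m$ reuses the subsequence-compactness machinery of Theorem~\ref{ThConvergence}: since $\gamma^{N}_{m}$ is an i.i.d.\ product, propagation of chaos (citing Gottlieb) forces any distributional limit flow $\hat{m}_{m}$ to be deterministic and to solve a McKean--Vlasov-type recursion; the consistency condition of Definition~\ref{DefCE} plus uniqueness for that recursion then identifies $\hat{m}_{m}=m$ for $\rho_{2}$-a.e.\ $m$, and dominated convergence closes the argument. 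You instead build a single explicit coupling in which the mediator's flow $\mu$ lives on the same probability space as the $N$-player system, use the consistency condition \emph{directly} to pin down the conditional law of the conditionally i.i.d.\ ideal replicas $\bar X^{N}_{j}$ as $\mu(t)$, apply a conditional law of large numbers, and then run the coupling induction of Step~Three of Theorem~\ref{ThConvergence} via \hypref{hypContSystem} and \eqref{EqEMdist}. Your route avoids the subsequence extraction, the external propagation-of-chaos reference, and the McKean--Vlasov uniqueness argument, and is in principle quantitative (rates from the LLN and the modulus $\boldsymbol{w}$); the paper's route buys wholesale reuse of the already-proved convergence theorem. Two points you should make explicit when writing this up: (i) the $\mathfrak{m}_{0,N}$ versus $\mathfrak{m}_{0}$ discrepancy in invoking consistency is handled by coupling the initial states so that the ideal trajectories differ with probability at most $\mathsf{dist}(\mathfrak{m}_{0,N},\mathfrak{m}_{0})$, which \hypref{hypInit} sends to zero; (ii) for the final cost convergence the weak-convergence detour through Lemma~\ref{LemmaPerturbConv} and the mapping theorem is not really needed --- your coupling already gives $\Prb(X^{N}_{1}(t)\neq\bar X^{N}_{1}(t))\to 0$ and $\Mean[\mathsf{dist}(\mu^{N}_{1}(t),\mu(t))]\to 0$, and since $\bar X^{N}_{1}$ together with $(\Phi^{N}_{1},\mu,\xi^{N}_{1})$ is itself a realization of $(\mathfrak{m}_{0,N},\rho,u)$, boundedness and uniform continuity of $f$, $F$ in the measure argument give the limit directly.
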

% Regular conditional distribution $\rho_1$ exists; see Theorem 6.3 in \citet{kallenberg01}

\begin{proof}
By symmetry, we may restrict attention to strategy modifications of player~1. For $N\in \mathbb{N}$, set
\[
	\epsilon_N \doteq J^{N}_{1} (\mathfrak{m}^{N}; \gamma^{N}, \Id ) - \inf_{u\in \mathcal{U}} J^{N}_{1} (\mathfrak{m}^{N}; \gamma^{N}, u).
\]
Then $\gamma^{N}$ is an $\epsilon_{N}$-correlated equilibrium with initial distribution $\mathfrak{m}^{N}$. It remains to show that $\epsilon_{N} \to 0$ as $N\to \infty$. To this end, choose a sequence of strategy modifications $u^N$ such that
\[
	J_{1}^{N}(\mathfrak m^N ; \gamma^N , u^N) \leq \inf_{u\in \mathcal{U}} J^{N}_{1}(\mathfrak{m}^N; \gamma^N, u) + \frac{1}{N}.
\]
We have to show that $\lim_{N \to \infty} J^{N}_{1} (\mathfrak{m}^N ; \gamma^N ,\Id) = J(\mathfrak m_0 ; \rho, \Id)$ and that
\[
	\liminf_{N \to \infty} J^{N}_{1} (\mathfrak{m}^N ; \gamma^N , u^N ) \ge J(\mathfrak m_0 ; \rho, \Id),
\]
as this entails that $\epsilon_{N} \to 0$ as $N\to \infty$.

In our setting, the set of strategy modifications, i.e.\ of mappings $u:\mathcal{R} \rightarrow \mathcal{R}$, is finite since $\mathcal{R}$ is finite. Therefore and by the optimality condition, the above limit inferior will be established as soon as $\liminf_{N \to \infty} J^{N}_{1} (\mathfrak m^N ; \gamma^N , u ) \geq J(\mathfrak m_0 ; \rho, u)$ for every strategy modification $u$. It is therefore enough to show that
\begin{equation} \label{ProofApproxCosts}
	\lim_{N \to \infty} J^{N}_{1} (\mathfrak m^N ; \gamma^N , u ) = J(\mathfrak m_0 ; \rho, u)\quad \text{for every } u\in \mathcal{U}.
\end{equation}
We proceed in three steps.

\paragraph{Step One.} For $N\in \mathbb{N}$, set
\[
	\gamma^{N}_{m} \doteq \otimes^{N} \rho_{1}(. \,|\, m),\quad m\in \prbms{\mathcal{X}}^{T+1}.
\]
Then, for every strategy modification $u\in \mathcal{U}$,
\begin{align*}
	J^{N}_{1} (\mathfrak m^N ; \gamma^N , u) &= \int_{\prbms{\mathcal{X}}^{T+1}} J^{N}_{1} (\mathfrak{m}^N ; \gamma^N_{m}, u) \rho_{2}(dm), \\
\intertext{and also}
J(\mathfrak m_0 ; \rho, u) &= \int_{\prbms{\mathcal{X}}^{T+1}} J(\mathfrak m_0 ; \rho_{1}(.\,|\, m) \otimes \delta_{m}, u) \rho_{2}(dm).
\end{align*}
To see this, recall that if we have a realization of $(\mathfrak m^N , \gamma^N , u)$ for player~1 in the $N$-player game or of $(\mathfrak m_0 , \rho, u)$ in the mean field game, then the sequence of noise variables, the initial states (or state), and the random elements realizing the correlated profile $\gamma^N$ (or the correlated flow $\rho$) are independent.

\paragraph{Step Two.} Fix a strategy modification $u$. Let $m\in \prbms{\mathcal{X}}^{T+1}$. As in the proof of Theorem~\ref{ThConvergence}, let $(\Phi_{m},X_{m}(.),\xi(.),\mu_{m}(.))$ and $(\tilde{\Phi}_{m},\tilde{X}_{m}(.),\tilde{\xi}(.),\tilde{\mu}_{m}(.))$ be the distributional limit along a convergent subsequence of realizations of $(\mathfrak{m}^N, \gamma^N_{m}, \Id)$ and $(\mathfrak{m}^N , \gamma^N_{m}, u)$, respectively. Then properties \eqref{ProofConvInitial}--\eqref{ProofConvCosts} there hold, for both $(\Phi_{m},X_{m}(.),\xi(.),\mu_{m}(.))$ and $(\tilde{\Phi}_{m},\tilde{X}_{m}(.),\tilde{\xi}(.),\tilde{\mu}_{m}(.))$. By construction and Step~Three in the proof of Theorem~\ref{ThConvergence}, we also have
\[
	\Prb_{m}\circ \left(\Phi_{m},X_{m}(0),\xi(.),\mu_{m}(.) \right)^{-1} = \tilde{\Prb}_{m}\circ \left(\tilde{\Phi}_{m},\tilde{X}_{m}(0),\tilde{\xi}(.),\tilde{\mu}_{m}(.) \right)^{-1}
\]
and	$\Prb_{m}\circ\, \Phi_{m}^{-1} = \tilde{\Prb}_{m}\circ \tilde{\Phi}_{m}^{-1} = \rho_{1}( . \,|\, m)$. Moreover, by \hypref{hypContSystem}, \hypref{hypInit} and thanks to the fact that $\gamma^{N}_{m}$ is the $N$-fold product of $\rho_{1}( . \,|\, m)$, propagation of chaos holds for the convergent subsequence corresponding to $(\mathfrak{m}^N, \gamma^N_{m}, \Id)$ in the sense that
\[
	\Prb_{m}\circ \left(X_{m}(t),\mu_{m}(t) \right)^{-1} = \hat{m}_{m}(t)\otimes \delta_{\hat{m}_{m}(t)},\quad t\in \{0,\ldots,T\},
\]
for some deterministic flow of measures $\hat{m}_{m} \in \prbms{\mathcal{X}}^{T+1}$ with $\hat{m}_{m}(0) = \mathfrak{m}_{0}$; see, for instance, Theorem~4.2 in \citet{gottlieb98}.
In view of property \eqref{ProofConvLimitDyn}, we therefore have $\Prb_{m}$-almost surely,
\begin{equation} \label{ProofApproxMKV}
\begin{split}
	X_{m}(t+1) &= \Psi\left( t, X_{m}(t), \hat{m}_{m}(t), \Phi_{m}(t,X_{m}(t)), \xi(t+1) \right), \\
	\Prb_{m}\circ X_{m}(t)^{-1} &= \hat{m}_{m}(t),\quad t\in \{0,\ldots,T-1\}.
\end{split}
\end{equation}
Using the independence properties \eqref{ProofConvNoise} and \eqref{ProofConvInd}, we see by induction over the time variable $t$ that Equation~\eqref{ProofApproxMKV}, together with the initial distribution $\Prb_{m}\circ X_{m}(0)^{-1} = \mathfrak{m}_{0}$ and the distribution $\Prb_{m}\circ \Phi_{m}^{-1} = \rho_{1}( . \,|\, m)$, uniquely determines the (deterministic) flow of measures $\hat{m}_{m}$. This can be seen as a uniqueness property for a kind of McKean-Vlasov equation.

\paragraph{Step Three.} We are going to show that $\hat{m}_{m} = m$ for $\rho_{2}$-almost every $m\in \prbms{\mathcal{X}}^{T+1}$, where $\hat{m}_{m}$ is the deterministic flow of measures identified in Step~Two.

Let $((\Omega,\mathcal{F},\Prb^{\ast}),\Phi^{\ast}, X^{\ast}(.), \mu^{\ast}(.), \xi^{\ast}(.))$ be a realization of the triple $(\mathfrak{m}_{0},\rho,\Id)$. The quintuple thus satisfies the dynamics given by Eq.~\eqref{EqLimitDyn}, that is, $\Prb^{\ast}$-almost surely, for every $t\in \{0,\ldots,T-1\}$,
\[
	X^{\ast}(t+1) = \Psi\left( t, X^{\ast}(t), \mu^{\ast}(t), \Phi^{\ast}(t,X^{\ast}(t)), \xi^{\ast}(t+1) \right).
\]
Moreover, $\Prb^{\ast}\circ (\Phi^{\ast},\mu^{\ast}(0),\ldots,\mu^{\ast}(T))^{-1} = \rho$ and $\Prb^{\ast}\circ (X^{\ast}(0))^{-1} = \mathfrak{m}_{0}$. By hypothesis, $\rho$ is a correlated solution with initial distribution $\mathfrak{m}_{0}$. In view of the consistency property, conditioning on $\mu^{\ast}$ therefore yields for $\rho_{2}$-almost every $m\in \prbms{\mathcal{X}}^{T+1}$,
\begin{align*}
	& \Prb^{\ast}\left(X^{\ast}(t)\in . \;|\; \mu^{\ast}=m\right) = m(t)( . ),& &t\in \{0,\ldots,T-1\},& \\
	&\Prb^{\ast}\left(\Phi^{\ast}\in . \;|\; \mu^{\ast}=m \right) = \rho_{1}( . \,|\,m).& &&
\end{align*}
Since the noise variables $\xi^{\ast}(t)$, $t\in \{1,\ldots,T\}$, are i.i.d.\ with common distribution $\nu$ and $\xi^{\ast}(.)$, $X^{\ast}(0)$, and $(\Phi^{\ast}, \mu^{\ast}(.))$ are independent, it follows that for $\rho_{2}$-almost every flow $m\in \prbms{\mathcal{X}}^{T+1}$, the triple $(X^{\ast}(.), \Phi^{\ast}, \xi^{\ast}(.))$ solves Equation~\eqref{ProofApproxMKV} $\Prb^{\ast}\left( . \;|\; \mu^{\ast}=m\right)$-almost surely with deterministic flow of measures $\hat{m}_{m} = m$. Uniqueness of solutions for Equation~\eqref{ProofApproxMKV} now entails that
\begin{align*}
	& \hat{m}_{m} = m & &\text{and}& &\Prb_{m}\circ \left(\Phi_{m},\mu_{m} \right)^{-1} = \tilde{\Prb}_{m}\circ \left(\tilde{\Phi}_{m},\tilde{\mu}_{m} \right)^{-1} = \rho_{1}(. |m)\otimes \delta_{m}&
\end{align*}
for $\rho_{2}$-almost every $m\in \prbms{\mathcal{X}}^{T+1}$. This also shows that, given a ($\rho_{2}$-typical) flow of measures $m\in \prbms{\mathcal{X}}^{T+1}$, any convergent subsequence of realizations of $(\mathfrak{m}^N, \gamma^N_{m}, \Id)$ has the same limit in distribution, and analogously for realizations of $(\mathfrak{m}^N ; \gamma^N_{m}, u)$.

Convergence of costs according to property \eqref{ProofConvCosts} and integration against $\rho_{2}$ according to Step~One, in conjunction with dominated convergence, finish the proof. 
\end{proof}

\begin{appendix}

\section{Auxiliary results}\label{AppAux}

Here, we collect some auxiliary results, mostly elementary, regarding weak convergence and exchangeable triangular arrays. We refer to \citet{billingsley68} for the theory of weak convergence of probability measures.

Let $\mathcal{Y}$, $\mathcal{Z}$ be Polish spaces. For $n\in \mathbb{N}$, let $Y_{n}$, $Z_{n}$ be random variables on $(\Omega_{n},\mathcal{F}_{n},\Prb_{n})$ with values in $\mathcal{Y}$ and $\mathcal{Z}$, respectively.

\begin{lemma} \label{LemmaEquationConv}
	Let $\Psi\!: \mathcal{Z} \rightarrow \mathcal{Y}$ be measurable.  Suppose that $(Y_{n},Z_{n})$ converges in distribution to $(Y,Z)$ as $n\to \infty$ for some $\mathcal{Y}\times \mathcal{Z}$-valued random variable $(Y,Z)$ defined on $(\Omega,\mathcal{F},\Prb)$.
	
	If $Y_{n} = \Psi(Z_{n})$ $\Prb_{n}$-almost surely for every $n\in \mathbb{N}$ and if $\Psi$ is continuous $\Prb\circ Z^{-1}$-almost everywhere, then $Y = \Psi(Z)$ $\Prb$-almost surely.
\end{lemma}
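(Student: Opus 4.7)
The plan is to reduce the statement to an application of the continuous mapping theorem in its almost-everywhere-continuous form, applied to the joint vector $(Y_{n},Z_{n})$, followed by a portmanteau argument on the diagonal of $\mathcal{Y}\times\mathcal{Y}$.

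First I would introduce the auxiliary map $h\!: \mathcal{Y}\times \mathcal{Z} \to \mathcal{Y}\times \mathcal{Y}$ defined by $h(y,z) \doteq (y,\Psi(z))$. Let $D\subseteq \mathcal{Z}$ be the set of points of discontinuity of $\Psi$; then the discontinuity set of $h$ is contained in $\mathcal{Y}\times D$. By hypothesis $\Prb(Z\in D) = 0$, so
\[
\Prb\bigl( (Y,Z) \in \text{disc}(h) \bigr) \leq \Prb(Z\in D) = 0.
\]
Since $(Y_{n},Z_{n})\Rightarrow (Y,Z)$ as $n\to\infty$, the continuous mapping theorem in its version for maps continuous outside a null set (see, e.g., Billingsley, Thm.~5.1) yields
\[
\bigl( Y_{n},\Psi(Z_{n}) \bigr) \;\xrightarrow{\,d\,}\; \bigl( Y,\Psi(Z) \bigr) \qquad \text{as } n\to \infty.
\]

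Next I would exploit the hypothesis $Y_{n} = \Psi(Z_{n})$ $\Prb_{n}$-almost surely. This means that for every $n$ the law of $(Y_{n},\Psi(Z_{n}))$ is concentrated on the diagonal $\Delta \doteq \{(y,y') \in \mathcal{Y}\times \mathcal{Y} : y = y'\}$. Because $\mathcal{Y}$ is Polish, hence Hausdorff, $\Delta$ is closed in $\mathcal{Y}\times \mathcal{Y}$. The portmanteau theorem applied to the closed set $\Delta$ gives
\[
\Prb\bigl( (Y,\Psi(Z)) \in \Delta \bigr) \;\geq\; \limsup_{n\to \infty} \Prb_{n}\bigl( (Y_{n},\Psi(Z_{n})) \in \Delta \bigr) = 1,
\]
so that $\Prb(Y = \Psi(Z)) = 1$, which is the desired conclusion.

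There is no real obstacle here beyond invoking the correct version of the continuous mapping theorem: the standard continuity hypothesis must be weakened to continuity on a set of full measure under the limiting law $\Prb\circ Z^{-1}$, which is precisely the assumption in the statement. The only mild subtlety is to locate the discontinuity set of $h$ inside $\mathcal{Y}\times D$, which is immediate from the product structure, and to recall that the diagonal is closed in the product of a Polish (hence Hausdorff) space with itself, so that the portmanteau inequality for closed sets applies.
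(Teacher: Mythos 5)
Your proposal is correct and follows essentially the same route as the paper's proof: apply the mapping theorem for almost-everywhere continuous maps to $(y,z)\mapsto (y,\Psi(z))$, then use the portmanteau inequality on the closed diagonal of $\mathcal{Y}\times\mathcal{Y}$. No gaps.
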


\begin{proof}
The hypothesis that $\Psi$ is continuous $\Prb\circ Z^{-1}$-almost everywhere implies that the mapping $\mathcal{Y}\times \mathcal{Z} \ni (y,z) \mapsto (y,\Psi(z))$ is continuous $\Prb\circ (Y,Z)^{-1}$-almost everywhere. By the convergence assumption and the mapping theorem \citet[Theorem~I.5.1]{billingsley68}, it follows that
\[
	(Y_{n},\Psi(Z_{n})) \stackrel{n\to\infty}{\longrightarrow} (Y,\Psi(Z)) \text{ in distribution}.
\]
Let $D\doteq \{ (y,\tilde{y})\in \mathcal{Y}\times \mathcal{Y} : y = \tilde{y} \}$ be the diagonal in $\mathcal{Y}\times \mathcal{Y}$. Then $D$ is closed in $\mathcal{Y}\times \mathcal{Y}$, hence
\[
	\limsup_{n\to\infty} \Prb_{n} \left( (Y_{n},\Psi(Z_{n}))\in D \right) \leq \Prb \left( (Y,\Psi(Z))\in D \right) 
\]
by the Portmanteau theorem \citep[Theorem~I.2.1]{billingsley68}. On the other hand, we have $\Prb_{n} \left( (Y_{n},\Psi(Z_{n}))\in D \right) = 1$ for every $n\in \mathbb{N}$ since $Y_{n} = \Psi(Z_{n})$ $\Prb_{n}$-almost surely by hypothesis. It follows that $\Prb \left( (Y,\Psi(Z))\in D \right) = 1$, that is, $Y = \Psi(Z)$ $\Prb$-almost surely.
\end{proof}

\begin{lemma} \label{LemmaPerturbConv}
	Let $\mathrm{d}_{\mathcal{Z}}$ be a metric compatible with the topology of $\mathcal{Z}$. Let $(Y_n,Z_n, \tilde Z_n)_{n \in \mathbb N}$ be a sequence of $\mathcal Y \times \mathcal Z \times \mathcal Z$-valued random variables, where each $(Y_n,Z_n,\tilde Z_n)$ is defined on some probability space $(\Omega_{n},\mathcal{F}_{n},\Prb_{n})$, $n \in \mathbb N$.   
	
Suppose that $(Y_{n},Z_{n})$ converges in distribution to $(Y,Z)$ as $n\to \infty$ for some $\mathcal{Y}\times \mathcal{Z}$-valued random variable $(Y,Z)$ defined on $(\Omega,\mathcal{F},\Prb)$, and that
	\[
		\Mean_{n}\left[ \mathrm{d}_{\mathcal{Z}}(Z_{n},\tilde{Z}_{n}) \right] \stackrel{n\to\infty}{\longrightarrow} 0.
	\]
	Then $(Y_{n},\tilde{Z}_{n})$ converges in distribution to $(Y,Z)$ as $n\to \infty$.
\end{lemma}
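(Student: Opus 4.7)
My plan is to reduce the claim to convergence of expectations against bounded Lipschitz functions, which form a convergence-determining class on the Polish space $\mathcal{Y}\times \mathcal{Z}$. Equip $\mathcal{Y}\times \mathcal{Z}$ with the product metric $\mathrm{d}((y,z),(y',z')) \doteq \mathrm{d}_{\mathcal{Y}}(y,y') + \mathrm{d}_{\mathcal{Z}}(z,z')$ for some compatible metric $\mathrm{d}_{\mathcal{Y}}$ on $\mathcal{Y}$. It is a standard fact (see, e.g., the portion of \citet{billingsley68} dealing with the Prokhorov/bounded-Lipschitz characterization of weak convergence on separable metric spaces) that a sequence $(\mu_n)$ of Borel probability measures on $\mathcal{Y}\times \mathcal{Z}$ converges weakly to $\mu$ iff $\int f\, d\mu_n \to \int f\, d\mu$ for every bounded Lipschitz $f\!: \mathcal{Y}\times \mathcal{Z}\rightarrow \mathbb{R}$.

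Fix such a bounded Lipschitz function $f$ with Lipschitz constant $L$ relative to $\mathrm{d}$. I would then decompose
\[
\left|\Mean_n\!\left[f(Y_n,\tilde{Z}_n)\right] - \Mean\!\left[f(Y,Z)\right]\right| \leq \left|\Mean_n\!\left[f(Y_n,\tilde{Z}_n) - f(Y_n,Z_n)\right]\right| + \left|\Mean_n\!\left[f(Y_n,Z_n)\right] - \Mean\!\left[f(Y,Z)\right]\right|.
\]
The second term tends to zero by the assumed convergence in distribution of $(Y_n,Z_n)$ to $(Y,Z)$, since $f$ is in particular bounded and continuous. For the first term, the Lipschitz property of $f$ gives pointwise the bound $|f(Y_n,\tilde Z_n) - f(Y_n,Z_n)| \leq L\cdot \mathrm{d}_{\mathcal{Z}}(Z_n,\tilde Z_n)$; taking expectations and using the hypothesis $\Mean_n[\mathrm{d}_{\mathcal{Z}}(Z_n,\tilde Z_n)]\to 0$ shows that this contribution also vanishes. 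Combining the two estimates yields $\Mean_n[f(Y_n,\tilde Z_n)] \to \Mean[f(Y,Z)]$ for every bounded Lipschitz $f$, and hence the desired convergence in distribution.

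There is no real obstacle here: the proof is a routine Slutsky-type coupling argument. The only points to be careful about are the choice of a convergence-determining class (bounded Lipschitz suffices on any separable metric space, in particular on the Polish product $\mathcal{Y}\times \mathcal{Z}$) and the implicit use of Markov's inequality if one prefers to argue instead that $\mathrm{d}_{\mathcal{Z}}(Z_n,\tilde Z_n)\to 0$ in $\Prb_n$-probability and then invoke a direct Slutsky-style lemma; both routes lead to the same conclusion.
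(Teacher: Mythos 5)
Your proof is correct and follows essentially the same route as the paper: both are Slutsky-type ``converging together'' arguments based on the product metric $\mathrm{d}_{\mathcal{Y}}+\mathrm{d}_{\mathcal{Z}}$. The only difference is that the paper uses Markov's inequality to get $\mathrm{d}\bigl((Y_n,Z_n),(Y_n,\tilde{Z}_n)\bigr)\to 0$ in probability and then cites Billingsley's Theorem~I.4.1, whereas you prove that step from scratch by testing against bounded Lipschitz functions --- a self-contained version of the same argument, which you yourself note is interchangeable with the Markov-plus-Slutsky route.
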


\begin{proof}
Let $\mathrm{d}_{\mathcal{Y}}$ be any metric compatible with the topology of $\mathcal{Y}$. Set
\[
	\mathrm{d}\left((y,z),(\tilde{y},\tilde{z})\right) \doteq \mathrm{d}_{\mathcal{Y}}(y,\tilde{y}) + \mathrm{d}_{\mathcal{Z}}(z,\tilde{z}),\quad (y,z),\; (\tilde{y},\tilde{z}) \in \mathcal{Y}\times \mathcal{Z}.
\]
Then $\mathrm{d}$ is a metric on $\mathcal{Y}\times \mathcal{Z}$ compatible with the product topology. By hypothesis and Markov's inequality, we have that $(\mathrm{d}((Y_{n},Z_{n}),(Y_{n},\tilde{Z}_{n})))_{n\in \mathbb{N}}$ converges to zero in probability. As the limit is a constant, this is equivalent to convergence in distribution, and the underlying probability spaces may depend on $n\in \mathbb{N}$. The assertion now follows from Theorem~I.4.1 in \citet[p.\,25]{billingsley68}.
\end{proof}

For the next result, let $\kappa_{n}$ be a regular conditional distribution of $Y_{n}$ given the $\sigma$-algebra generated by $Z_{n}$, each $n\in \mathbb{N}$. Thus, $\kappa_{n}$ is a mapping $\Omega_{n}\times \Borel{\mathcal{Y}} \rightarrow [0,1]$ that induces a $\prbms{\mathcal{Y}}$-valued random variable and is such that, for every $A\in \Borel{\mathcal{Y}}$,
\[
	\Prb_{n}\left( Y_{n} \in A \;|\; Z_{n} \right) = \kappa_{n}(A) \quad \Prb_{n}\text{-almost surely.}
\]
A regular conditional distribution of $Y_{n}$ given $Z_{n}$ exists since $\mathcal{Y}$ is a Polish space, and it is uniquely determined with probability one when seen as a $\prbms{\mathcal{Y}}$-valued random variable.

\begin{lemma} \label{LemmaCondDistConv}
	Let $\kappa_{n}$ be a regular conditional distribution of $Y_{n}$ given $\sigma(Z_{n})$ as above. Suppose that $(Y_{n},Z_{n},\kappa_{n})$ converges in distribution to $(Y,Z,\kappa)$ as $n\to \infty$ for some $\mathcal{Y}\times \mathcal{Z}\times \prbms{\mathcal{Y}}$-valued random variable $(Y,Z,\kappa)$ defined on $(\Omega,\mathcal{F},\Prb)$. Then, for every $A\in \Borel{\mathcal{Y}}$, every $B\in \Borel{\mathcal{Z}}$,
	\[
		\Prb\left( Y \in A, Z\in B \right) = \Mean\left[ \mathbf{1}_{B}(Z)\cdot \kappa(A)\right]. 
	\]
	If, in addition, $\kappa(A)$ is $\sigma(Z)$-measurable for every $A\in \Borel{\mathcal{Y}}$, then $\kappa$ is a regular conditional distribution of $Y$ given $\sigma(Z)$.
\end{lemma}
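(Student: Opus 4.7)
The plan is to push the defining identity of regular conditional distributions through the weak limit, first by testing against bounded continuous functions and then extending to Borel rectangles by uniqueness of the joint law.

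I would start from the prelimit identity coming straight from the definition of $\kappa_{n}$: for every pair of bounded measurable $\phi\!:\mathcal{Y}\to\mathbb{R}$ and $\psi\!:\mathcal{Z}\to\mathbb{R}$,
\[
\Mean_{n}\bigl[\phi(Y_{n})\psi(Z_{n})\bigr] = \Mean_{n}\bigl[\psi(Z_{n})\,\kappa_{n}(\phi)\bigr],
\]
where $\kappa_{n}(\phi) \doteq \int_{\mathcal{Y}}\phi\,d\kappa_{n}$. When both $\phi$ and $\psi$ are bounded and continuous, the map $(y,z,m)\mapsto\psi(z)\int\phi\,dm$ on $\mathcal{Y}\times\mathcal{Z}\times\prbms{\mathcal{Y}}$ is bounded and continuous, since $m\mapsto\int\phi\,dm$ is continuous on $\prbms{\mathcal{Y}}$ by the very definition of the weak topology. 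The assumed joint convergence in distribution of $(Y_{n},Z_{n},\kappa_{n})$ to $(Y,Z,\kappa)$ then yields, passing to the limit on both sides,
\[
\Mean\bigl[\phi(Y)\psi(Z)\bigr] = \Mean\bigl[\psi(Z)\,\kappa(\phi)\bigr]
\]
for all bounded continuous $\phi,\psi$.

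Next, I would interpret this as equality of two Borel probability measures on $\mathcal{Y}\times\mathcal{Z}$, namely the joint law of $(Y,Z)$ and the measure $\mu(A\times B)\doteq\Mean[\mathbf{1}_{B}(Z)\,\kappa(A)]$; the latter is a genuine probability measure because $\kappa$ takes values in $\prbms{\mathcal{Y}}$. Since the two measures agree on all integrals of product functions $\phi(y)\psi(z)$ with $\phi,\psi$ bounded continuous, and such functions form a convergence-determining family on $\mathcal{Y}\times\mathcal{Z}$, a standard monotone class argument (equivalently, uniqueness of a finite Borel measure on a Polish space determined by its integrals against bounded continuous functions) forces them to coincide on every Borel rectangle $A\times B$. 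This is precisely the first assertion.

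For the second part, the additional measurability assumption makes $\omega\mapsto\kappa(\omega)(A)$ a $\sigma(Z)$-measurable random variable whose integral against $\mathbf{1}_{B}(Z)$ reproduces $\Prb(Y\in A,Z\in B)$ for every $B\in\Borel{\mathcal{Z}}$; by the defining property of conditional expectation this gives $\kappa(A)=\Prb(Y\in A\,|\,\sigma(Z))$ almost surely, and since $\kappa$ is $\prbms{\mathcal{Y}}$-valued by construction, it is a regular conditional distribution of $Y$ given $\sigma(Z)$. The main obstacle is the passage from bounded continuous test functions to indicators, and it is resolved by the uniqueness argument above; no further tightness or approximation step is needed beyond the given joint weak convergence.
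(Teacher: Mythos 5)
Your proposal is correct and follows essentially the same route as the paper's proof: start from the defining identity of $\kappa_{n}$, pass to the limit using that $(z,m)\mapsto\psi(z)\int\phi\,dm$ is bounded continuous on $\mathcal{Z}\times\prbms{\mathcal{Y}}$, and conclude by uniqueness of finite Borel measures on a Polish product space determined by integrals against bounded continuous functions. The only cosmetic difference is that the paper tests against general bounded continuous $g(y,z)$ while you use products $\phi(y)\psi(z)$; both families are measure-determining, so the arguments coincide in substance.
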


\begin{proof}
Let $Q\in \prbms{\mathcal{Y}\times \mathcal{Z}}$ be the joint law of $Y$ and $Z$: $Q\doteq \Prb\circ (Y,Z)^{-1}$. Define another measure $\tilde{Q}\in \prbms{\mathcal{Y}\times \mathcal{Z}}$ by setting, for $A\in \Borel{\mathcal{Y}}$, $B\in \Borel{\mathcal{Z}}$,
\[
	\tilde{Q}(A\times B) \doteq \Mean\left[ \mathbf{1}_{B}(Z)\cdot \kappa(A) \right].
\]

Let $g\!: \mathcal{Y}\times \mathcal{Z} \rightarrow \mathbb{R}$ be bounded and measurable. Then, for every $n\in \mathbb{N}$,
\[
	\Mean_{n}\left[ g(Y_{n},Z_{n}) \right] = \Mean_{n}\left[ \int_{\mathcal{Y}} g(y,Z_{n}) \kappa_{n}(dy) \right],
\]
since $\kappa_{n}$ is a version of the regular conditional distribution of $Y_{n}$ given $\sigma(Z_{n})$ by hypothesis. If $g$ is bounded and continuous, then, by convergence in distribution of $(Y_{n},Z_{n})$ to $(Y,Z)$,
\[
	\lim_{n\to\infty} \Mean_{n}\left[ g(Y_{n},Z_{n}) \right] = \Mean\left[ g(Y,Z) \right],
\]
but also, by convergence in distribution of $(Z_{n},\kappa_{n})$ to $(Z,\kappa)$,
\[
	\lim_{n\to\infty} \Mean_{n}\left[ \int_{\mathcal{Y}} g(y,Z_{n}) \kappa_{n}(dy) \right] = \Mean\left[ \int_{\mathcal{Y}} g(y,Z) \kappa(dy) \right]
\]
since the mapping $(z,m) \mapsto \int_{\mathcal{Y}} g(y,z) m(dy)$ is bounded and continuous on $\mathcal{Z}\times \prbms{\mathcal{Y}}$ if $g$ is bounded and continuous on $\mathcal{Y}\times \mathcal{Z}$; cf.\ Theorem~I.5.5 in \citet[p.\,34]{billingsley68}.

Therefore, for every $g\!: \mathcal{Y}\times \mathcal{Z} \rightarrow \mathbb{R}$ bounded and continuous,
\[
	\int_{\mathcal{Y}\times \mathcal{Z}} g\, dQ = \Mean\left[ g(Y,Z) \right] = \Mean\left[ \int_{\mathcal{Y}} g(y,Z) \kappa(dy) \right] = \int_{\mathcal{Y}\times \mathcal{Z}} g\, d\tilde{Q}.
\]
A measure on the Borel sets of a Polish space is uniquely determined by its integrals over all bounded continuous functions. It follows that $Q = \tilde{Q}$. This in turn implies that for all $A\in \Borel{\mathcal{Y}}$, all $B\in \Borel{\mathcal{Z}}$,
\[
	\Mean\left[ \mathbf{1}_{A}(Y) \cdot \mathbf{1}_{B}(Z) \right] = \Mean\left[ \mathbf{1}_{B}(Z)\cdot \kappa(A)\right],
\]
which yields the first part of the assertion. If, in addition, $\kappa(A)$ is $\sigma(Z)$-measurable for every $A\in \Borel{\mathcal{Y}}$, then
\[
	\Prb\left( Y \in A \;|\; Z \right) = \kappa(A) \quad \Prb\text{-almost surely}
\]
by the above property and the definition of conditional expectation.
\end{proof}

\begin{lemma} \label{LemmaCondDistTower}
	Let $Y$ and $\kappa$ be random variables on some probability space $(\Omega,\mathcal{F},\Prb)$ with values in $\mathcal{Y}$ and $\prbms{\mathcal{Y}}$, respectively. Let $\mathcal{C}$, $\mathcal{G}$, $\tilde{\mathcal{C}}$ be sub-$\sigma$-algebras of $\mathcal{F}$ such that $\mathcal{C} \subseteq \mathcal{G} \subseteq \tilde{\mathcal{C}}$.

	If $\kappa$ is a regular conditional distribution of $Y$ given $\mathcal{C}$ as well as given $\tilde{\mathcal{C}}$, then $\kappa$ is also a regular conditional distribution of $Y$ given $\mathcal{G}$.
\end{lemma}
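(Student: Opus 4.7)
The plan is to verify the two defining properties of a regular conditional distribution of $Y$ given $\mathcal{G}$: first, that $\kappa(A)$ is $\mathcal{G}$-measurable for every $A \in \mathcal{B}(\mathcal{Y})$; second, that $\kappa(A)$ is a version of $\mathbf{P}(Y \in A \mid \mathcal{G})$. The $\prbms{\mathcal{Y}}$-valued random variable $\kappa$ itself is already defined on $(\Omega,\mathcal{F},\Prb)$, and the usual monotone class / regularity conditions are already built into its being a regular conditional distribution with respect to $\mathcal{C}$, so only the conditioning sub-$\sigma$-algebra changes.

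For the measurability, since $\kappa$ is a regular conditional distribution of $Y$ given $\mathcal{C}$, the random variable $\kappa(A)$ is $\mathcal{C}$-measurable for each $A \in \mathcal{B}(\mathcal{Y})$. The inclusion $\mathcal{C} \subseteq \mathcal{G}$ immediately promotes this to $\mathcal{G}$-measurability. For the averaging property, I would fix $A \in \mathcal{B}(\mathcal{Y})$ and $G \in \mathcal{G}$. Because $\mathcal{G} \subseteq \tilde{\mathcal{C}}$ we have $G \in \tilde{\mathcal{C}}$, and the hypothesis that $\kappa$ is a regular conditional distribution of $Y$ given $\tilde{\mathcal{C}}$ gives
\[
	\Mean\left[ \mathbf{1}_{G} \cdot \mathbf{1}_{A}(Y) \right] = \Mean\left[ \mathbf{1}_{G} \cdot \Mean\left[ \mathbf{1}_{A}(Y) \;|\; \tilde{\mathcal{C}} \right] \right] = \Mean\left[ \mathbf{1}_{G} \cdot \kappa(A) \right].
\]
Combined with the $\mathcal{G}$-measurability of $\kappa(A)$, this is exactly the defining identity $\Prb(Y \in A \mid \mathcal{G}) = \kappa(A)$ $\Prb$-almost surely. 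Since $A$ was arbitrary and $\kappa(\omega,\cdot)$ is already a probability measure on $\Borel{\mathcal{Y}}$ for $\Prb$-almost every $\omega$ (by its being a regular conditional distribution with respect to $\mathcal{C}$), the regularity requirement is automatic.

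There is essentially no obstacle: the lemma is a tower-style consistency statement, and the role of $\tilde{\mathcal{C}}$ is purely to supply the averaging identity over $\mathcal{G}$-sets, while the role of $\mathcal{C}$ is purely to supply measurability and the regular-kernel structure. The only mild subtlety worth flagging in the write-up is that the $\mathcal{C}$-hypothesis is what ensures $\kappa$ already lives as a proper kernel, so no additional measurable-selection argument is needed; the $\tilde{\mathcal{C}}$-hypothesis then does all the work of matching conditional expectations on the intermediate $\sigma$-algebra $\mathcal{G}$.
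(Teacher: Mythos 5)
Your proof is correct and follows exactly the same route as the paper's: $\mathcal{C}$-measurability plus $\mathcal{C}\subseteq\mathcal{G}$ gives $\mathcal{G}$-measurability of $\kappa(A)$, and the averaging identity over sets in $\mathcal{G}\subseteq\tilde{\mathcal{C}}$ follows from the $\tilde{\mathcal{C}}$-hypothesis. No gaps.
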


\begin{proof}
Suppose that $\kappa$ is a regular conditional distribution of $Y$ given $\mathcal{C}$ as well as given $\tilde{\mathcal{C}}$. Let $A\in \Borel{\mathcal{Y}}$. The first part of the assumption implies that $\kappa(A)$ is $\mathcal{C}$-measurable, hence also $\mathcal{G}$-measurable (since $\mathcal{C} \subseteq \mathcal{G}$). The second part of the assumption entails that
\[
	\Prb\left( \{Y \in A \} \cap C \right) = \Mean\left[ \mathbf{1}_{C} \cdot \kappa(A) \right]
\]
for all $C\in \tilde{\mathcal{C}}$, hence also for all $C\in \mathcal{G}$ (since $\mathcal{G} \subseteq \tilde{\mathcal{C}}$). This shows that $\kappa$ is a regular conditional distribution of $Y$ given $\mathcal{G}$.
\end{proof}

The next result recalls the conditional distribution of an element of a finite exchangeable sequence given the associated empirical measure.

\begin{lemma} \label{LemmaCondDistExchange}
	Let $Y_{1},\ldots,Y_{N}$ be a finite exchangeable sequence of $\mathcal{Y}$-valued random variables on some probability space $(\Omega,\mathcal{F},\Prb)$, and let
	\[
		\mu^{N}_{\omega} \doteq \frac{1}{N} \sum_{i=1}^{N} \delta_{Y_{i}(\omega)},\quad \omega \in \Omega,
	\]
	be the associated empirical measure. Then, for every $i\in \{1,\ldots,N\}$,
	\[
		\Prb\left( Y_{i}\in . \;|\; \mu^{N} \right) = \mu^{N}(.)
	\]
	in the sense that $\mu^{N}$ is a regular conditional distribution of $Y_{i}$ given (the $\sigma$-algebra generated by) $\mu^{N}$.
\end{lemma}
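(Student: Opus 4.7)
The plan is to verify directly the two defining properties of a regular conditional distribution, namely (a) for each $A \in \Borel{\mathcal{Y}}$, the map $\omega \mapsto \mu^{N}_{\omega}(A)$ is $\sigma(\mu^{N})$-measurable, and (b) for each $A \in \Borel{\mathcal{Y}}$ and each $C \in \sigma(\mu^{N})$,
\[
	\Prb\bigl(\{Y_{i}\in A\}\cap C\bigr) = \Mean\bigl[\mathbf{1}_{C}\cdot \mu^{N}(A)\bigr].
\]
The measure-valued property (that $\mu^{N}_{\omega} \in \prbms{\mathcal{Y}}$ for every $\omega$) is built into the definition, and (a) is immediate since $\mu^{N}(A) = \frac{1}{N}\sum_{j=1}^{N} \mathbf{1}_{A}(Y_{j})$ is a bounded Borel function of the $\prbms{\mathcal{Y}}$-valued random variable $\mu^{N}$ (evaluation at $A$ being a Borel map on $\prbms{\mathcal{Y}}$).

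The substantive step is (b). By a standard monotone class / $\pi$-$\lambda$ argument it suffices to show
\begin{equation*}
	\Mean\bigl[\mathbf{1}_{A}(Y_{i})\cdot H(\mu^{N})\bigr] = \Mean\bigl[\mu^{N}(A)\cdot H(\mu^{N})\bigr]
\end{equation*}
for every bounded measurable $H\colon \prbms{\mathcal{Y}} \rightarrow \mathbb{R}$. The key observation is that $\mu^{N}$ is a symmetric function of $(Y_{1},\ldots,Y_{N})$, so for any permutation $\sigma$ of $\{1,\ldots,N\}$ the quantity $H(\mu^{N}(Y_{\sigma(1)},\ldots,Y_{\sigma(N)}))$ equals $H(\mu^{N})$ pointwise. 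Choosing, for each $j$, a permutation that sends $i$ to $j$, the exchangeability hypothesis applied to the pair $(Y_{i},H(\mu^{N}))$ yields
\[
	\Mean\bigl[\mathbf{1}_{A}(Y_{i})\cdot H(\mu^{N})\bigr] = \Mean\bigl[\mathbf{1}_{A}(Y_{j})\cdot H(\mu^{N})\bigr],\qquad j\in \{1,\ldots,N\}.
\]
Averaging over $j$ produces
\[
	\Mean\bigl[\mathbf{1}_{A}(Y_{i})\cdot H(\mu^{N})\bigr] = \Mean\Biggl[\frac{1}{N}\sum_{j=1}^{N}\mathbf{1}_{A}(Y_{j})\cdot H(\mu^{N})\Biggr] = \Mean\bigl[\mu^{N}(A)\cdot H(\mu^{N})\bigr],
\]
which is the desired identity.

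The argument is essentially routine; there is no genuine obstacle. The only point requiring a small amount of care is the combination of exchangeability of the vector $(Y_{1},\ldots,Y_{N})$ with the permutation invariance of $\mu^{N}$ inside the functional $H(\mu^{N})$, which is what licenses the symmetrization over the index $j$. Independence of $Y_{i}$ from $(Y_{k})_{k\neq i}$ is neither assumed nor used.
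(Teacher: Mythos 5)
Your argument is correct. The paper itself disposes of this lemma with a one-line citation to Lemma~11.11 in Kallenberg's \emph{Foundations of Modern Probability}, so you have in effect supplied the proof that the paper outsources; your symmetrization argument is essentially the standard proof of that cited result. The two steps you isolate are exactly the right ones: $\sigma(\mu^{N})$-measurability of $\omega \mapsto \mu^{N}_{\omega}(A)$ is immediate because the evaluation map $m \mapsto m(A)$ is Borel on $\prbms{\mathcal{Y}}$, and the identity
\[
	\Mean\bigl[\mathbf{1}_{A}(Y_{i})\, H(\mu^{N})\bigr] = \Mean\bigl[\mu^{N}(A)\, H(\mu^{N})\bigr]
\]
for bounded measurable $H$ follows correctly from applying exchangeability to the symmetric functional $(y_{1},\ldots,y_{N}) \mapsto \mathbf{1}_{A}(y_{i})\,H\bigl(\tfrac{1}{N}\sum_{k}\delta_{y_{k}}\bigr)$ with a permutation sending $i$ to $j$, and then averaging over $j$. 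Since every $C \in \sigma(\mu^{N})$ has $\mathbf{1}_{C} = \mathbf{1}_{B}\circ\mu^{N}$ for some Borel $B \subseteq \prbms{\mathcal{Y}}$, taking $H = \mathbf{1}_{B}$ gives the defining property of the regular conditional distribution directly, without even needing the monotone class step you mention. The self-contained version has the advantage of making visible that only finite exchangeability (no independence, no de Finetti-type structure) is used, which is exactly the form in which the lemma is applied in Step Four of the convergence proof.
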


\begin{proof}
The assertion follows from Lemma~11.11 in \citet[p.\,213]{kallenberg01}.
\end{proof}

\end{appendix}

%Bibliography
\bibliographystyle{abbrvnat}
%\bibliography{MathLit}

\end{document}